\numberwithin{equation}{section}
\pgfplotsset{compat=1.15}
\newcommand{\RR}{\mathbb{R}}
\newcommand{\DD}{\mathbb{D}}
\newtheorem{theorem}{Theorem}%[section]
\theoremstyle{theorem}
\newtheorem{proposition}[theorem]{Proposition}
\newtheorem{definition}[theorem]{Definition}
\newtheorem{lemma}[theorem]{Lemma}
\newtheorem{corollary}[theorem]{Corollary}
\newtheorem{remark}[theorem]{Remark}
\title{Starting the study of outer length billiards}
\author{Luca Baracco, Olga Bernardi, Corentin Fierobe \\ \\
baracco@math.unipd.it, obern@math.unipd.it, cpef@gmx.de}
\date{ }
\begin{document}

\maketitle

\begin{abstract}
\noindent We focus on the outer length billiard dynamics, acting on the exterior of a strictly-convex planar domain. We first show that ellipses are totally integrable. We then provide an explicit representation of first order terms for the formal Taylor expansion of the corresponding Mather's $\beta$-function. Finally, we provide explicit Lazutkin coordinates up to order 4.
\end{abstract}
\section{Introduction}
The aim of the present paper is starting an accurate study of outer length billiards, first described by P. Albers and S. Tabachnikov in 2024, see \cite{AT}[Section 3.4]. These billiards are the counterpart of Birkhoff ones since the generating function is the outer length instead of the inner length. They are also called ``fourth billiards''. In fact, there are two billiards systems --Birkhoff and outer area billiards-- which have been extensively studied in literature; we refer respectively to \cite{TabLIBRO} and \cite{Tab-outer} for exhaustive surveys. Another type of billiards, namely symplectic billiards, whose generating function is the inner area, were introduced in 2018 by P. Albers and S. Tabachnikov \cite{ATSym} and their study started to become more intensive only recently. We refer to \cite{BaBe}, \cite{BaBeNa4} and \cite{Daniel} for integrability results and to \cite{BaBeNa1} and \cite{CorVigAlf} for area spectral rigidity results for symplectic billiards. Regarding outer length billiards, to the best of our knowledge, they were not studied yet. However, the seminal idea on the base of the definition of this dynamical system --detecting, in particular, circumscribed polygons to a strictly-convex domain with minimal perimeter-- can already be found in some former papers of convex planar geometry, see e.g. \cite{DeTe}[Theorem 1] and \cite{CiGo}[Section 2]. \\
~\newline
\noindent We first give all the details to introduce this dynamical system, acting on the exterior of a strictly-convex planar domain. We then prove --by arguments of elementary planar geometry-- that ellipses are totally integrable, that is the phase-space is fully foliated by continuous invariant curves which are not null-homotopic. \\
~\newline
\noindent We successively focus on the main topic of the paper, which is providing an explicit representation of first order terms for the formal Taylor expansion of Mather's $\beta$-function (or minimal average function) for outer length billiards. In particular, we write these coefficients (up to order $5$) by means of the ordinary curvature and length of the boundary of the billiard table. As already noticed, for such a dynamical system, Mather's $\beta$-function is related to the minimal perimeter of polygons circumscribed to a strictly-convex domain. These perimeters are special cases (i.e. for periodic trajectories of winding number $=1$) of the corresponding marked length spectrum for outer length billiards. \\
~\newline
\noindent Finally, by using the computations we made to obtain minimal average function's coefficients, we provide explicit Lazutkin coordinates up to order 4 and discuss straightforward facts regarding the existence/non existence of caustics for outer length billiards. \\
~\newline
\noindent In order to state our results, we proceed with some preliminaries.
\section{Twist maps and Mather's $\beta$-function}

Let $\mathbb{T} \times (a,b)$ be the annulus, where $\mathbb{T} = \mathbb{R}/ \mathbb{Z} = [0, 1]/ \sim$ identifying $0 \sim 1$ and (eventually) $a=-\infty$ and/or $b= +\infty$. Given a diffeomorphism $\Phi: \mathbb{S}^1 \times (a,b) \rightarrow \mathbb{S}^1 \times (a,b),$ we denote by 
$$\phi: \mathbb{R} \times (a,b) \rightarrow \mathbb{R} \times (a,b), \qquad (x_0,y_0) \mapsto (x_1,y_1)$$
a lift of $\Phi$ to the universal cover. Then $\phi$ is a diffeomorphism and $\phi(x+1,y) = \phi(x,y) + (1,0)$. In the case where $a$ (resp. $b$) is finite, we assume that $\phi$ extends continuously to $\mathbb{R} \times \{a\}$ (resp. $\mathbb{R} \times \{b\}$) by a rotation of fixed angle:
\begin{equation} \label{rotazione}
\phi(x_0,a) = (x_0 + \rho_a,a) \qquad (\text{resp. \ } \phi(x_0,b) = (x_0 + \rho_b,b)).
\end{equation}
Once fixed the lift, the numbers $\rho_a, \rho_b$ are unique. The choice of $\rho_a$ (resp. $\rho_b$) if $a = -\infty$ (resp. $b = +\infty$) depends on the dynamics at infinity. For example, in the case of outer length billiards where $b = +\infty$, it is natural to set $\rho_b = 1/2$, we refer to point 1. of Section \ref{SIS-DIN} for details. \\
\indent We recall here below -- for reader's convenience -- the well-known definition of monotone twist map, see e.g. \cite{Si}[Page 2]. 
\begin{definition}
A monotone twist map $\phi: \mathbb{R} \times (a,b) \rightarrow \mathbb{R} \times (a,b)$, $(x_0,y_0) \mapsto (x_1,y_1)$
is a diffeomorphism satisfying:
\begin{enumerate}
\item $\phi(x_0+1,y_0) = \phi(x_0,y_0) + (1,0)$.
\item $\phi$ preserves orientations and the boundaries of $\mathbb{R} \times (a,b)$. 
\item $\phi$ extends to the boundaries by rotation, as in (\ref{rotazione}).
\item $\phi$ satisfies a monotone twist condition, that is
\begin{equation} \label{TC}
    \frac{\partial x_1}{\partial y_0} > 0.
\end{equation}
\item $\phi$ is exact symplectic; this means that there exists a generating function $H \in C^2(\mathbb{R} \times \mathbb{R}; \mathbb{R})$ for $\phi$ such that
\begin{equation} \label{GF}
y_1dx_1 - y_0dx_0 = dH(x_0,x_1).
\end{equation}
\end{enumerate}
\end{definition}
\noindent Clearly, $H(x_0+1,x_1+1) = H(x_0,x_1)$ and, due to the twist condition, the domain of $H$ is the strip $\{(x_0,x_1): \ \rho_a + x_0 < x_1 < x_0 + \rho_b\}$. Moreover, equality (\ref{GF}) reads
\begin{equation} \label{ESSE}
\begin{cases} y_1=H_2(x_0,x_1)\\
y_0=-H_1(x_0,x_1)
\end{cases}
\end{equation}
and the twist condition (\ref{TC}) becomes $H_{12} < 0$. As a consequence, $\{(x_i,y_i)\}_{i \in \mathbb{Z}}$ is an orbit of $\phi$ if and only if $H_2(x_{i-1},x_i) = y_i = -H_1(x_i,x_{i+1})$ for all $i \in \mathbb{Z}$. This means -- on a formal level -- that the corresponding bi-infinite sequence $x := \{x_i\}_{i \in \mathbb{Z}}$ is a so-called critical configuration of the action functional $\sum_{i\in \mathbb{Z}} H(x_i,x_{i+1})$. In such a setting, minimal orbits play a fundamental role. We recall that a critical configuration $x$ of $\phi$ is minimal if every finite segment of $x$ minimizes the action functional with fixed end points (we refer to \cite{Si}[Page 7] for details). Clearly, all these facts remain true if we consider a monotone twist map on $\{(x_0,x_1): \ u_a(x_0) < x_1 < u_b(x_0)\}$. \\
\indent For a twist map $\phi$ generated by $H$, we finally introduce the rotation number and the Mather's $\beta$-function (or minimal average action).
\begin{definition} The rotation number of an orbit $\{(x_i,y_i)\}_{i \in \mathbb{Z}}$ of $\phi$ is 
$$\rho := \lim_{i\to\pm\infty}\frac{x_i}{i}$$
if such a limit exists. 
\end{definition}
\noindent A relevant class of monotone twist maps are planar billiard maps. In such a setting, the rotation number of a periodic trajectory is the rational 
$$\frac{m}{n} = \frac{\text{winding number}}{\text{number of reflections}} \in (0, \frac{1}{2}\Big{]},$$ 
\noindent we refer to \cite{Si}[Page 40] for details. \\
\indent In view of the celebrated Aubry-Mather theory (see e.g. \cite{Bang}), a monotone twist map possesses minimal orbits for every rotation number $\rho$ inside the so-called twist interval $(\rho_{a},\rho_{b})$. As a consequence, we can associate to each $\rho$ the average action of any minimal orbit having that rotation number.  
\begin{definition} \label{Mather beta}
The Mather's $\beta$-function of $\phi$ is $\beta: (\rho_a,\rho_b) \to \mathbb{R}$ with
$$\beta(\rho) := \lim_{N\to\infty}\frac{1}{2N}\sum_{i=-N}^{N-1} H(x_i,x_{i+1})$$
where $\lbrace x_i\rbrace_{i\in \mathbb{Z}}$ is any minimal configuration of $\phi$ with rotation number $\rho$.
\end{definition}
In the framework of Birkhoff billiards, A. Sorrentino in \cite{Sor} gave an explicit representation of the coefficients of the (formal) Taylor expansion at zero of the corresponding Mather’s $\beta$-function. More recently, J. Zhang in \cite{Zhang} got (locally) an explicit formula for this function via a Birkhoff normal form. Moreover, M. Bialy in \cite{BiEl} obtained an explicit formula for Mather's $\beta$-function for ellipses by using a non-standard generating function, involving the support function. Regarding symplectic and outer billiards, the first two authors and A. Nardi in \cite{BaBeNa} computed explicitly the higher order terms of such an expansion, by using tools from affine differential geometry. As anticipated, one of the target of the present paper is writing explicitly these coefficients (up to order 5) in the case of forth billiards. 

\section{The dynamical system} \label{SIS-DIN}
Let $\Omega$ be a strictly-convex planar domain with smooth boundary $\partial\Omega$. Assume that the perimeter of $\partial \Omega$ is $\ell = |\partial \Omega|$. Fixed the positive counter-clockwise orientation, let $\gamma: \mathbb{T} \to \partial \Omega$ be the smooth arc-length parametrization of $\partial \Omega$. For every $s \in \mathbb{T}$, we denote by $s^* \in \mathbb{T}$ the (unique, by strict-convexity) arc-length parameter such that $T_{\gamma(s)} \partial \Omega = T_{\gamma(s^*)} \partial \Omega$. We refer to $$\mathcal P = \{(s,r) \in \mathbb{T} \times \mathbb{T}: \ s < r < s^*\}$$ as the (open, positive) phase-space and we define the outer length billiard map as follows (we refer to \cite{AT}[Section 3.4]). \\
\indent Since $\Omega$ is strictly-convex, to every point $P \in \mathbb{R}^2 \setminus \text{cl}(\Omega)$ can be uniquely associated a pair $(s_0,s_{1}) \in \mathbb{T} \times \mathbb{T}$ with $s_0 < s_{1}$ and such that the lines $P\gamma(s_0)$ and $P\gamma(s_{1})$ are the (negative and positive) tangents to $\partial \Omega$. 
Consider the circle in $\mathbb{R}^2 \setminus \Omega$ tangent to $\partial \Omega$ at $\gamma(s_{1})$ and to the line $P\gamma(s_0)$. Then then image $P'$ of $P$ is defined as the intersection point between the lines $P\gamma(s_{1})$ and the other common tangent line of the circle and $\partial \Omega$ (hence passing through $P'$ and $\gamma(s_{2})$): 
$$T : \mathcal{P} \to \mathcal{P}, \qquad (s_0,s_{1}) \mapsto (s_{1},s_{2}).$$

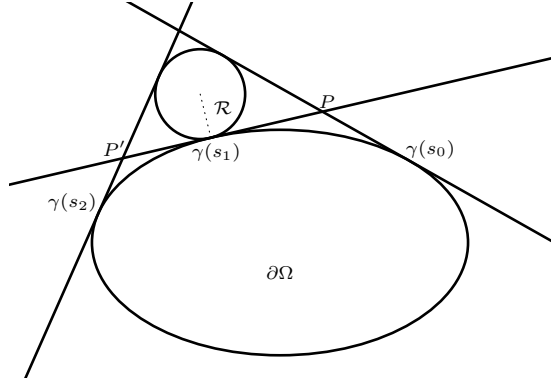
\begin{figure}
    \centering
\begin{tikzpicture}[line cap=round,line join=round,>=triangle 45,x=2cm,y=2cm]
\clip(-1.8,-0.9) rectangle (1.8,1.6);
\draw [rotate around={0:(0,0)},line width=1pt] (0,0) ellipse (2.5cm and 1.5cm);
\draw [line width=1pt,domain=-2.4644525360396767:2.925337292636209] plot(\x,{(--14.0625--10.87381925368578*\x)/4.808139752941082});
\draw [line width=1pt,domain=-2.4644525360396767:2.925337292636209] plot(\x,{(--14.0625--4.077092715913812*\x)/17.475364801934});
\draw [line width=1pt,domain=-2.4644525360396767:2.925337292636209] plot(\x,{(--14.0625-7.712987683825162*\x)/13.649615479846691});
\draw [line width=1pt] (-0.5304631168743307,0.9873081843929444) circle (0.5976cm);
\draw[black,dotted] (-0.5304631168743307,0.9873081843929444) -- (-0.46,0.7);
\begin{scriptsize}
\draw[color=black] (1,0.6221857002178954) node {$\gamma(s_0)$};
\draw[color=black] (-0.42393158262452424,0.6) node {$\gamma(s_1)$};
\draw[color=black] (-1.38,0.26685360315769346) node {$\gamma(s_2)$};
\draw[color=black] (0.3113595885199015,0.938818261954709) node {$P$};
\draw[color=black] (-1.1,0.6292219793676024) node {$P'$};
\draw[color=black] (0,-0.2) node {$\partial\Omega$};
\draw[color=black] (-0.38,0.9) node {$\mathcal R$};
\end{scriptsize}
\end{tikzpicture}
    \caption{The outer-length billiard map around the domain $\Omega$. It associates the point $P$ to the point $P'$.}
    \label{fig:outer_length_billiard_reflection}
\end{figure}
\noindent We refer to Figure \ref{fig:outer_length_billiard_reflection}. Setting $\varepsilon_0 = s_{1} - s_0$ and
$$\hat{\mathcal{P}} = \{(s,\varepsilon) \in \mathbb{T} \times \mathbb{R}: \ 0 < \varepsilon < s^*-s\},$$
the outer length billiard map can be equivalently defined as
\[
T: \hat{\mathcal{P}} \to \hat{\mathcal{P}}, \qquad (s_0,\varepsilon_0) \mapsto(s_{1},\varepsilon_{1}).
\]
Here are some properties of the outer length billiard map. \\
~\newline
1. $T$ is continuous and can be continuously extended so that $T(s,s) = (s,s)$ and $T(s,s^*) = (s^*,s).$ \\
~\newline
2. The function 
$$H: \mathcal{P} \to \mathbb{R}, \qquad H(s_0,s_{1}) := |P\gamma(s_0)| + |P\gamma(s_{1})|$$ 
generates $T$, that is
\begin{equation}
\label{var}
T(s_0,s_{1}) = (s_{1},s_{2}) \qquad \Longleftrightarrow \qquad H_2(s_0,s_{1}) + H_1(s_{1},s_{2}) = 0.
\end{equation}
We refer to \cite{AT}[Lemma 3.1] for the proof. In view of (\ref{var}), we can equivalently refer to
$$\bar{H}: \mathcal{P} \to \mathbb{R}, \qquad \bar{H}(s_0,s_{1}) := |P\gamma(s_0)| + |P\gamma(s_{1})| -s_{1} + s_0$$ 
as a generating function, which is exactly the Lazutkin parameter of $\partial \Omega$, interpreted as convex caustic for a Birkhoff billiard. \\
~\newline
3. $T$ is a twist map preserving the area form $-H_{12}(s_0,s_1) \ ds_0 \wedge ds_1$. \\
~\newline
4. By introducing new variables
$$y_0 = -H_1(s_0,s_{1}), \qquad y_{1} = H_2(s_0,s_{1}),$$
$(s,y)$ are coordinates on $\mathcal{P}$ and the outer length billiard map results a (negative) twist map, since
$$\frac{\partial y_{1}}{\partial s_0} = H_{12}(s_0,s_{1}) = - \frac{k(s_{0}) k(s_{1}) H(s_0,s_{1})} {2 \sin^2(\varphi/2)} < 0,$$
where $\varphi$ is the angle between the tangent lines $P\gamma(s_0)$ and $P\gamma(s_{1})$ (see also \cite{AT}[Page 11]). In these coordinates, the preserved area form is the standard one: $ds \wedge dy$. \\
~\newline
5. The marked length spectrum for the outer length billiard is the map $\mathcal{ML}_o(\Omega): \mathbb{Q} \cap{(0, \frac{1}{2}\Big{)}} \to \mathbb{R}$ that associates to any $m/n$ in lowest terms the minimal perimeter of the periodic trajectories having rotation number $m/n$. We refer to \cite{Si}[Sections 3.1 and 3.2] for a general treatment of the marked spectrum. Clearly, periodic outer length billiard minimal trajectories (with winding number $=1$) correspond to convex polygons realizing the minimal (circumscribed) perimeter, so that: 
\begin{equation} \label{beta-MA}
    \beta\left( \frac{1}{n}\right) = \frac{1}{n} \mathcal{ML}_o(\Omega) \left( \frac{1}{n}\right).
\end{equation}

\subsection{Circles and ellipses} %\olga{This section to be revised}

\noindent As expected, the outer length billiard on the circle (of center $O$) is totally integrable: the phase-space is completely foliated by concentric invariant circles. By using as coordinates $(\alpha_0,\alpha_{1}) \in \mathbb{T} \times \mathbb{T}$, where $\alpha_0$ and $\alpha_{1}$ are respectively the angles of $O\gamma(s_0)$ and $O\gamma(s_{1})$ with respect to the positive horizontal direction, the generating function in the case of disk of unit radius is $$H(\alpha_0,\alpha_{1}) = 2\tan\left(\frac{\alpha_{1} - \alpha_0}{2}\right).$$ 
Equivalently, in terms of $(\alpha_0,y_0) = (\alpha_0, -H_1(\alpha_0,\alpha_{1})) =     \left(\alpha_0, 1+\tan^2\left(\frac{\alpha_1-\alpha_0}{2}\right)\right)$, we have that %$H(\alpha_0,y_{0}) = 2 (\sqrt{y_0} - \arctan \sqrt{y_0})$, 

\[
H(\alpha_0,y_0) = 2\sqrt{y_0-1}
\]
and the total integrability immediately follows. \\
\indent An unexpected fact --at least from the authors' point of view, since the billiard dynamics are not invariant by affine transformations-- is that also the outer length billiard on the ellipse is totally integrable, as stated in the next proposition. 

\begin{proposition}
\label{prop:integrability_ellipse}
    Let $\mathcal E$ and $\Gamma$ be two confocal nested ellipses, $\mathcal E \subset \Gamma$. Then $\Gamma$ is a caustic for the outer-length billiard dynamics outside $\mathcal E$.
\end{proposition}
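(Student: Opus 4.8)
The aim is to show that, for each ellipse $\Gamma$ confocal to and containing $\mathcal E$, the curve $\{P\in\Gamma\}\subset\hat{\mathcal P}$ is invariant under the outer length billiard map $T$; as $\Gamma$ sweeps the confocal family these curves are not null-homotopic and foliate the phase space, which is exactly total integrability with the $\Gamma$'s as caustics. So I fix $P\in\Gamma$ outside $\mathcal E$, call $\gamma(s_0),\gamma(s_1)$ its two tangency points and $M:=\gamma(s_1)$ the shared one, and let $P'=T(P)$. By the construction of $T$ the vertices $P$ and $P'$ are consecutive vertices of the circumscribed polygonal line and both lie on the tangent line to $\mathcal E$ at $M$, with $M$ between them. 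The plan is to show that the \emph{second} intersection $\tilde P$ of this tangent line with $\Gamma$, together with $P$, verifies the critical relation (\ref{var}); since (\ref{var}) determines $T(P)$ uniquely (the twist condition $H_{12}<0$ gives a single admissible vertex on the tangent line at $M$), this forces $P'=\tilde P\in\Gamma$.

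First I would turn (\ref{var}) into a geometric reflection law at $M$. Differentiating $H(s_0,s_1)=|P\gamma(s_0)|+|P\gamma(s_1)|$ in each argument---noting that, when one index is frozen, the vertex slides along a fixed tangent line---expresses $H_2(s_0,s_1)$ and $H_1(s_1,s_2)$ as a tangent length from the vertex to $M$, times the curvature $k(s_1)$ at $M$, times a half-angle factor. The decisive algebraic point is that $k(s_1)$ enters both terms of (\ref{var}) identically and cancels. The half-angle factor is the cotangent of half the interior angle of the polygon at the vertex; here I invoke two optical facts: the internal bisector of the tangent cone from an exterior point $P$ to a conic coincides with the internal bisector of the focal angle $\angle F_1PF_2$ (isogonality), while the reflection property of $\Gamma$ says its normal bisects that same focal angle. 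Hence the normal to $\Gamma$ at each vertex bisects the tangent cone to $\mathcal E$, the half-cone angle equals $\tfrac\pi2-\eta$ with $\eta$ the angle between the side and the tangent to $\Gamma$, and (\ref{var}) collapses to
\[
|PM|\,\tan\eta \;=\; |P'M|\,\tan\eta',
\]
where $\eta,\eta'$ are the angles the line $PP'$ makes with the tangent to $\Gamma$ at $P$ and at $P'$.

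It then remains to prove this identity for an arbitrary chord $PP'$ of $\Gamma$ tangent to the confocal $\mathcal E$ at an interior point $M$. This is the heart of the matter and the step I expect to be the main obstacle, being the only place where confocality---and not mere convexity---is used. I would argue by elementary planar geometry based at the common foci $F_1,F_2$: the reflection property of $\mathcal E$ at $M$ yields $\angle F_1MP=\angle F_2MP'$; the reflection property of $\Gamma$ at $P$ and at $P'$ fixes the angles the focal radii make with the two tangents to $\Gamma$, hence relates each of $\eta,\eta'$ to the corresponding focal angle; and the constancy of $|XF_1|+|XF_2|$ on $\Gamma$ supplies the remaining lengths. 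Writing the law of sines in the triangles $PMF_i$ and $P'MF_i$ and eliminating the focal data should reduce both sides of the displayed identity to the same expression.

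Finally I would record the conceptual check given by Graves's theorem: a closed taut string wrapped around $\mathcal E$ traces exactly the confocal $\Gamma$, and since the complementary boundary arc has length $\ell-(s_1-s_0)$, Graves's constant is precisely the modified generating function $\bar H(s_0,s_1)=|P\gamma(s_0)|+|P\gamma(s_1)|-s_1+s_0$ introduced after (\ref{var}). Thus the invariant curves are the level sets $\{\bar H=\mathrm{const}\}$, which simultaneously identifies the candidate caustics and, once the reflection law above is established, confirms their invariance. Iterating $T$ then keeps every vertex on $\Gamma$, so $\Gamma$ is a caustic; letting $\Gamma$ range over the confocal family foliates the exterior of $\mathcal E$ and yields total integrability.
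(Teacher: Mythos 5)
Your strategy is genuinely different from the paper's: instead of checking the defining circle construction synthetically (which is what the paper does, via the incircle of the triangle formed by three consecutive sides of the circumscribed polygon), you verify the variational characterization \eqref{var} directly. The reduction you carry out is correct: differentiating $H(r,s)=\frac{(\gamma(s)-\gamma(r))\wedge(\gamma'(s)-\gamma'(r))}{\gamma'(r)\wedge\gamma'(s)}$ gives $H_2(s_0,s_1)=1+k(s_1)\,|PM|\tan(\varphi/2)$ and $H_1(s_1,s_2)=-1-k(s_1)\,|P'M|\tan(\varphi'/2)$, where $\varphi,\varphi'$ are the exterior angles of the circumscribed polygon at the vertices $P$ and $P'$; the constants and the common factor $k(s_1)$ cancel exactly as you claim, and since $T_P\Gamma$ bisects the exterior angle at $P$ (focal reflection of $\Gamma$ plus isogonality of the tangent cone to $\mathcal E$, as you argue), \eqref{var} is precisely $|PM|\tan\eta=|P'M|\tan\eta'$. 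Together with the strict monotonicity of $s_2\mapsto H_1(s_1,s_2)$ coming from $H_{12}<0$, this would indeed force $T(P)=\tilde P\in\Gamma$.

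However, the proof of that identity --- which you yourself flag as ``the heart of the matter'' --- is missing: ``writing the law of sines \ldots{} should reduce both sides to the same expression'' is a plan, not an argument, and this is the only place where confocality (rather than mere convexity) enters, so the gap is not cosmetic. The good news is that it closes in one line using the very Lemma \ref{lemma:orthogonality_ellipse} on which the paper's proof rests: let $R=T_P\Gamma\cap T_{P'}\Gamma$; since $RM\perp PP'$, the right triangles $PMR$ and $P'MR$ give $|PM|\tan\eta=|RM|=|P'M|\tan\eta'$. In other words, your target identity is literally equivalent to Stachel's perpendicularity statement. Once you either cite that lemma or supply an honest focal-geometry proof of it, your variational route is complete, and it is arguably more informative than the paper's synthetic one, since it exhibits the outer-length reflection law on a caustic in the explicit form $|PM|\tan\eta=|P'M|\tan\eta'$. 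The Graves/$\bar H$ observation at the end is a nice consistency check on which curves are the candidate caustics, but it does not substitute for the missing step.
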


\noindent The proof of Proposition \ref{prop:integrability_ellipse} relies on the following lemma of elementary planar geometry, see Figure \ref{fig:orthogonality_ellipse}.

\begin{lemma}[Lemma 2.4 in \cite{Stachel}]
\label{lemma:orthogonality_ellipse}
    Let $P_0,P_1\in\Gamma$ two distinct points such that the line $P_0P_1$ is tangent to $\mathcal E$ at a point $Q$. Let $R$ be the intersection point of the tangent lines to $\Gamma$ at $P_0$ and $P_1$. Then the lines $P_0P_1$ and $RQ$ are orthogonal.
\end{lemma}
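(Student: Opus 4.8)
\emph{Proof proposal.} The plan is to first replace the statement by a cleaner equivalent one. Since the line $P_0P_1$ is by hypothesis the tangent to $\mathcal E$ at $Q$, its normal direction at $Q$ is exactly the direction perpendicular to $P_0P_1$. Hence the claim ``$P_0P_1\perp RQ$'' is equivalent to the assertion that $R$ lies on the \emph{normal} to $\mathcal E$ at $Q$. So it suffices to show that the pole $R$ of the chord $P_0P_1$ with respect to $\Gamma$ (i.e. the intersection of the two tangents to $\Gamma$ at $P_0,P_1$) sits on the normal line to the inner ellipse at its contact point. This reformulation isolates precisely the geometric content that confocality must supply.

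Next I would fix coordinates adapted to the confocal family. Placing the common foci at $(\pm c,0)$, write $\Gamma:\ x^2/a^2+y^2/b^2=1$ and $\mathcal E:\ x^2/\alpha^2+y^2/\beta^2=1$ with $a^2-b^2=\alpha^2-\beta^2=c^2$. The confocality is then encoded in the single relation $a^2-\alpha^2=b^2-\beta^2=:\lambda>0$, which will be the engine of the whole argument. I parametrize $P_0,P_1$ on $\Gamma$ by their eccentric angles $\theta_0,\theta_1$ and abbreviate $\mu=(\theta_0+\theta_1)/2$ and $\nu=(\theta_1-\theta_0)/2$.

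The proof is then three short computations. First, intersecting the tangents to $\Gamma$ at $P_0$ and $P_1$ gives the pole $R=(a\cos\mu/\cos\nu,\ b\sin\mu/\cos\nu)$, while the chord $P_0P_1$ has equation $(\cos\mu/a)\,x+(\sin\mu/b)\,y=\cos\nu$ and hence direction vector $(-a\sin\mu,\ b\cos\mu)$. Second, imposing tangency of this line to $\mathcal E$ and using the standard contact-point formula for a tangent to an ellipse yields $Q=(\alpha^2\cos\mu/(a\cos\nu),\ \beta^2\sin\mu/(b\cos\nu))$. Third, subtracting,
\[
R-Q=\frac{1}{\cos\nu}\left(\frac{a^2-\alpha^2}{a}\cos\mu,\ \frac{b^2-\beta^2}{b}\sin\mu\right),
\]
and here the confocal relation $a^2-\alpha^2=b^2-\beta^2=\lambda$ collapses this to $R-Q=\tfrac{\lambda}{\cos\nu}(\cos\mu/a,\ \sin\mu/b)$. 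A one-line dot product of this vector with the chord direction $(-a\sin\mu,\ b\cos\mu)$ gives $-\cos\mu\sin\mu+\sin\mu\cos\mu=0$, which is exactly $RQ\perp P_0P_1$.

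The only genuine obstacle is conceptual rather than computational: one must recognise that the orthogonality is produced entirely by confocality. Without it, the two components of $R-Q$ would retain the unrelated factors $(a^2-\alpha^2)/a$ and $(b^2-\beta^2)/b$, the vector would fail to align with the chord's normal direction $(\cos\mu/a,\ \sin\mu/b)$, and the conclusion would break; it is precisely the equality $a^2-\alpha^2=b^2-\beta^2$ that rescues the alignment. A secondary point to treat with care is the degenerate configuration $\nu=\pi/2$ (diametrically placed contact, $\cos\nu=0$), where $R$ escapes to infinity and the statement must be read projectively, or simply dispatched by a limiting/symmetry argument.
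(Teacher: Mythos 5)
Your computation is correct, and every step checks out: the pole of the chord of $\Gamma$ joining eccentric angles $\theta_0,\theta_1$ is indeed $R=(a\cos\mu/\cos\nu,\,b\sin\mu/\cos\nu)$, the contact point of the line $ux+vy=w$ with $\mathcal E$ is $(\alpha^2u/w,\,\beta^2v/w)$, and the confocal relation $a^2-\alpha^2=b^2-\beta^2$ is exactly what makes $R-Q$ proportional to the chord's normal $(\cos\mu/a,\,\sin\mu/b)$. Note, however, that the paper does not prove this lemma at all --- it is imported verbatim as Lemma~2.4 of \cite{Stachel} --- so there is no internal argument to compare yours against; what you have written is a self-contained elementary verification of the cited result, which is more than the paper supplies. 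One small simplification: the degenerate case $\cos\nu=0$ that you propose to handle projectively is in fact vacuous under the hypotheses, since then the chord $(\cos\mu/a)x+(\sin\mu/b)y=0$ would pass through the common centre of the two ellipses, and the tangency condition $\alpha^2\cos^2\mu/a^2+\beta^2\sin^2\mu/b^2=\cos^2\nu$ forces $\cos\nu\neq 0$; so no limiting argument is needed.
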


%\corentin{Should we give an elementary proof of this result? The one in \cite{Stachel} is computational...}

\begin{proof}[Proof of Proposition \ref{prop:integrability_ellipse}]
Let a point $P_0$ on $\Gamma$. Consider the positive tangent line to $\mathcal E$ at a point $Q$ and passing through $P_0$. %We choose the one so that the orientation from $P_0$ to $Q$ is the orientation we fixed at the beginning for the outer-length billiard. 
let $P_1\in\Gamma$ be the intersection point of the latter tangent line $P_0Q$ with $\Gamma$, see Figure \ref{fig:outer_length_ellipse}. We need to show that $P_1$ is the image of $P_0$ under the outer-length billiard reflection outside $\mathcal E$. Consider the point $P$, such that $PP_0$ and $PP_1$ are the two tangent lines to $\mathcal E$ passing through $P$, see Figure \ref{fig:outer_length_ellipse}. Since $\mathcal E$ and $\Gamma$ are confocal, $\mathcal E$ is a caustic for the classical billiard in $\Gamma$. In particular, the tangent line $T_{P_0}\Gamma$ is a bisector of the angle $\widehat{P_1P_0P}$. With the same argument the tangent line $T_{P_1}\Gamma$ is a bisector of the angle $\widehat{P_0P_1P}$. Hence $T_{P_0}\Gamma$ and $T_{P_1}\Gamma$ intersects at a point $R$ which is the center of the inscribed circle $\mathcal D$ to the triangle $P_0PP_1$. Now --by Lemma \ref{lemma:orthogonality_ellipse}-- the lines $RQ$ and $P_0P_1$ are orthogonal. In particular $\mathcal D$ is tangent to the ellipse $\mathcal E$. This implies that $P_1$ is obtained from $P_0$ by the outer-length billiard law of reflection.
\end{proof}

\begin{figure}
    \centering
    \definecolor{wrwrwr}{rgb}{0.3803921568627451,0.3803921568627451,0.3803921568627451}
\definecolor{rvwvcq}{rgb}{0.08235294117647059,0.396078431372549,0.7529411764705882}
\begin{tikzpicture}[line cap=round,line join=round,>=triangle 45,x=2cm,y=2cm, scale=0.8]
\clip(-2.2,-1.4) rectangle (2.2,2);
\draw [rotate around={0:(0,0)},line width=1pt] (0,0) ellipse (2.828427125cm and 2cm);
\draw [rotate around={0:(0,0)},line width=1pt] (0,0) ellipse (3.12409987cm and 2.4cm);
\draw [line width=1pt,domain=-3.2529192711182193:3.6970088523158386] plot(\x,{(--56.2176--33.193035646061155*\x)/18.106190780256863});
\draw [line width=1pt,domain=-3.2529192711182193:3.6970088523158386] plot(\x,{(--56.2176--3.633386971464968*\x)/46.60864613103539});
\draw [line width=1pt] (-1.440669949915849,0.4637856244942844)-- (-0.15769908730316703,1.1938690095039803);
\draw [line width=1pt] (-1.0817149356921705,1.1218371565399394)-- (-0.8866515866329271,0.779053580930514);
\begin{scriptsize}
\draw[color=black] (-0.6897930376063115,-0.75) node {$\mathcal E$};
\draw[color=black] (-0.85,-1.12) node {$\Gamma$};
\draw [fill=wrwrwr] (-0.8866515866329271,0.779053580930514) circle (1pt);
\draw[color=wrwrwr] (-0.8485707688858103,0.65) node {$Q$};
\draw[color=black] (-0.42,1.8) node {$T_{P_0}\Gamma$};
\draw[color=black] (-2,0.89) node {$T_{P_1}\Gamma$};
\draw[color=wrwrwr] (-1.15,1.23) node {$R$};
\draw [fill=rvwvcq] (-1.440669949915849,0.4637856244942844) circle (1pt);
\draw[color=wrwrwr] (-1.57,0.5) node {$P_0$};
\draw [fill=wrwrwr] (-0.15769908730316703,1.1938690095039803) circle (1pt);
\draw[color=wrwrwr] (-0.12272971160810187,1.2916510015726108) node {$P_1$};
\end{scriptsize}
\end{tikzpicture}
\caption{The line $RQ$ is orthogonal to the line $P_0P_1$}
\label{fig:orthogonality_ellipse}
\end{figure}
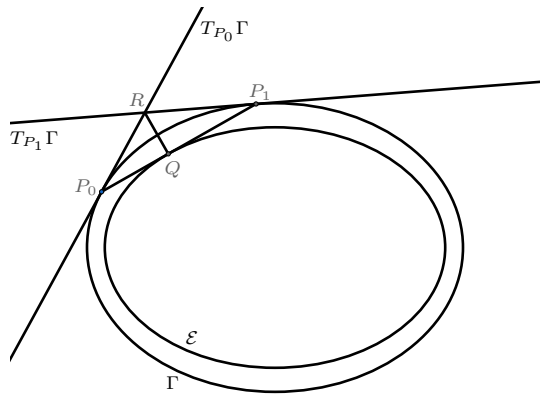

\begin{figure}
    \centering
    \definecolor{wrwrwr}{rgb}{0.3803921568627451,0.3803921568627451,0.3803921568627451}
\definecolor{rvwvcq}{rgb}{0.08235294117647059,0.396078431372549,0.7529411764705882}
\begin{tikzpicture}[line cap=round,line join=round,>=triangle 45,x=2cm,y=2cm, scale=0.8]
\clip(-2.2,-1.4) rectangle (2.2,2);
\draw [rotate around={0:(0,0)},line width=1pt] (0,0) ellipse (2.828427125cm and 2cm);
\draw [rotate around={0:(0,0)},line width=1pt] (0,0) ellipse (3.12409987cm and 2.4cm);
\draw [line width=1pt,domain=-3.2529192711182193:3.6970088523158386] plot(\x,{(--56.2176--33.193035646061155*\x)/18.106190780256863});
\draw [line width=1pt,domain=-3.2529192711182193:3.6970088523158386] plot(\x,{(--56.2176--3.633386971464968*\x)/46.60864613103539});
\draw [line width=1pt] (-1.4131545906973866,-0.03869174841554122)-- (-1.5086427793816215,1.7050853839497757);
\draw [line width=1pt] (-1.5086427793816215,1.7050853839497757)-- (0.6672834888153125,0.8816838281250444);
\draw [line width=1pt] (-1.440669949915849,0.4637856244942844)-- (-0.15769908730316703,1.1938690095039803);
\draw [line width=1pt] (-1.0817149356921705,1.1218371565399394)-- (-0.8866515866329271,0.779053580930514);
\draw [line width=1pt] (-1.0817149356921705,1.1218371565399394) circle (0.788797287cm);
\begin{scriptsize}
\draw[color=black] (-0.6897930376063115,-0.75) node {$\mathcal E$};
\draw[color=black] (-0.85,-1.12) node {$\Gamma$};
\draw [fill=wrwrwr] (-0.8866515866329271,0.779053580930514) circle (1pt);
\draw[color=wrwrwr] (-0.8485707688858103,0.65) node {$Q$};
\draw[color=black] (-0.42,1.8) node {$T_{P_0}\Gamma$};
\draw[color=black] (-2,0.89) node {$T_{P_1}\Gamma$};
\draw[color=wrwrwr] (-1.15,1.23) node {$R$};
\draw [fill=rvwvcq] (-1.440669949915849,0.4637856244942844) circle (1pt);
\draw[color=wrwrwr] (-1.57,0.5) node {$P_0$};
\draw [fill=wrwrwr] (-0.15769908730316703,1.1938690095039803) circle (1pt);
\draw[color=wrwrwr] (-0.12272971160810187,1.2916510015726108) node {$P_1$};
\draw[color=wrwrwr] (-1.5,1.8) node {$P$};
\end{scriptsize}
\end{tikzpicture}
    \caption{The point $P_0\in\Gamma$ is reflected to the point $P_1\in\Gamma$ by the outer-length billiard dynamics around $\mathcal E$.}
    \label{fig:outer_length_ellipse}
\end{figure}
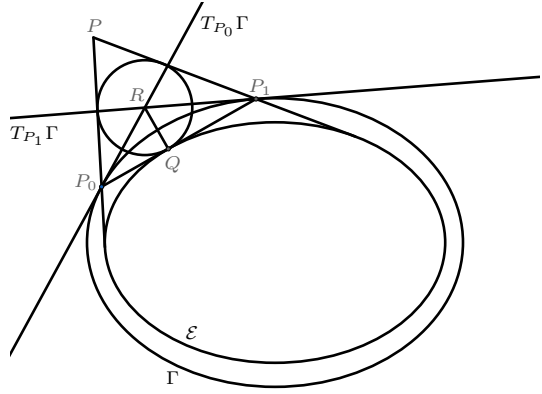

\indent We underline that it would be interesting to investigate if these are the unique cases; this fundamental problem (possibly to be studied by an integral inequality \`a la Bialy \cite{BIALY92}) may present non-trivial difficulties, due to the infinite total area of the phase-space.

\section{Asymptotic expansions}
S. Marvizi and R. Melrose’s theory, first stated and proved for Birkhoff billiards \cite{MaMe}[Theorem 3.2], can be applied to the general case of (strongly) billiard-like maps, see \cite{Glu}[Section 2.1].  As an outcome, the following expansion at $\rho = 0$ of the corresponding minimal average function holds:
$$\beta (\rho) \sim \beta_1 \rho + \beta_3 \rho^3 + \beta_5 \rho^5  + \ldots$$
in terms of odd powers of $\rho$. It is well-known –see e.g. \cite{MaMe}[Section 7] again– that for usual billiards the sequence $\{\beta_k\}$ can be interpreted as a spectrum of a differential operator, see also Remark 2.11 in \cite{ATSym}. The question is widely open for other types of billiards, included outer length billiards. \\
\indent In this section, we gather all the technical results in order to prove the next theorem, providing coefficient $\beta_5$ for the outer length billiard map. This result is a refinement of \cite{MCLure-Vitale}[Theorem 1, point $(iii)$]. In fact, in a genuine framework of convex planar geometry, D.E. Vitale and  R.A. McClure computed $\beta_3$ by using as coordinate the support function and as parameter the angle with respect to a fixed direction.
\begin{theorem} \label{MAIN THEO}
Let $\Omega$ be a strictly-convex planar domain with smooth boundary $\partial \Omega$. Suppose that $\partial \Omega$ has everywhere positive curvature. Denote by $k(s)$ the (ordinary) curvature of $\partial \Omega$ with arc-length parameter $s$. Let $\ell$ be
the length of the boundary and
$$L := \int^\ell_0 k^{2/3}(s) ds.$$
The formal Taylor expansion at $\rho = 0$ of Mather’s $\beta$-function for the outer length billiard map has
coefficients:
\begin{eqnarray*}
\beta_{2k} &=& 0 \text{ for all } k \\
\beta_1 &=& \ell \\
\beta_3 &=& \frac{L^3}{12} \\
\beta_5 &=& L^4\int^\ell_0\left(\frac{k^{4 / 3}(s)}{120}+\frac{k^{-\frac{8}{3}}(s) k^{\prime 2}(s)}{2160} \right)ds.
\end{eqnarray*}
\end{theorem}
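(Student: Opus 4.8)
The plan is to read off the coefficients from the asymptotics of the minimal perimeter of circumscribed $n$-gons, via the identity (\ref{beta-MA}). Writing $P_n^{\min}:=\mathcal{ML}_o(\Omega)\left(\tfrac1n\right)$ for the minimal circumscribed perimeter, (\ref{beta-MA}) reads $P_n^{\min}=n\,\beta(1/n)$, so the formal expansion $\beta(\rho)\sim\beta_1\rho+\beta_3\rho^3+\beta_5\rho^5+\cdots$ becomes $P_n^{\min}=\beta_1+\beta_3 n^{-2}+\beta_5 n^{-4}+\cdots$. Since every circumscribed polygon has perimeter at least $\ell$ and the minimal one converges to $\partial\Omega$ as $n\to\infty$, one gets $\beta_1=\ell$ immediately. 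Because only odd powers of $\rho$ occur (the content of the Marvizi--Melrose/Gluskin expansion recalled at the start of this section), $\beta_{2k}=0$ for all $k$, and the whole task reduces to extracting the $n^{-2}$ and $n^{-4}$ coefficients of the \emph{excess} perimeter $P_n^{\min}-\ell=\sum_i \bar H(s_i,s_{i+1})$, where $\{s_i\}$ is the minimal configuration and $\bar H=H-(s_1-s_0)$ is the reduced generating function of Section~\ref{SIS-DIN}.

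The technical heart is the local expansion of $\bar H$ along the diagonal. Fixing $s$ and setting $\varepsilon=s_1-s_0$, I would compute, by Taylor expanding the arc-length parametrization $\gamma$ in its Frenet frame (equivalently through the support function, using $d\theta=k\,ds$ and $k=(h+h'')^{-1}$), the intersection point $P$ of the tangent lines at $\gamma(s)$ and $\gamma(s+\varepsilon)$ together with the two tangent lengths $|P\gamma(s)|$ and $|P\gamma(s+\varepsilon)|$. This produces $\bar H(s,s+\varepsilon)=A(s)\varepsilon^3+B(s)\varepsilon^4+C(s)\varepsilon^5+O(\varepsilon^6)$; the model case of the circle, where $H=2k^{-1}\tan(k\varepsilon/2)$, already fixes $A=k^2/12$ and the leading part $k^4/120$ of $C$. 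The decisive structural fact to establish is the identity $B=\tfrac12 A'=kk'/12$, together with the explicit remaining part of $C$ (a combination of $k'^2$ and $kk''$). I expect this order-$5$ expansion to be the main obstacle: it is where all the geometry enters and where sign and bookkeeping errors are most likely.

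With the expansion in hand I would recover the coefficients by the Marvizi--Melrose density argument. Parametrize the minimal configuration as $s_i=g(i/n)$ with $g(t+1)=g(t)+\ell$, and use that for a smooth periodic integrand $\sum_{i=0}^{n-1}F(i/n)=n\int_0^1 F\,dt+O(n^{-\infty})$ (Euler--Maclaurin, periodic case). Expanding $\varepsilon=g(t+1/n)-g(t)$ in powers of $1/n$ and substituting into $\bar H$ yields $P_n^{\min}-\ell=n\int_0^1\bar H\big(g(t),g(t+1/n)\big)\,dt+O(n^{-\infty})=n^{-2}I_2[G]+n^{-3}I_3[G]+n^{-4}I_4[G]+\cdots$, to be minimized over the density $G(s):=g'(g^{-1}(s))$ subject to $\int_0^\ell G^{-1}\,ds=1$. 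Here $I_2[G]=\int_0^\ell A\,G^2\,ds$, and a Lagrange multiplier (Hölder) gives the minimiser $G_0=L\,k^{-2/3}$ and $\beta_3=I_2[G_0]=L^3/12$. The identity $B=\tfrac12A'$ makes $I_3\equiv 0$ after one integration by parts, which both reconfirms $\beta_4=0$ and forces the density correction to vanish to the relevant order, so that $\beta_5=I_4[G_0]$.

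Finally I would simplify $I_4$. Converting the $g''$ and $g'''$ terms by integration by parts in $t$ (using $g'(t)=G(g(t))$, hence $g''=G'G$) gives the first-order functional
\[ I_4[G]=\int_0^\ell\Big[\big(\tfrac{A''}{8}-\tfrac{B'}{2}+C\big)G^4-\tfrac{A}{4}(G')^2G^2\Big]\,ds. \]
Substituting $G_0=L\,k^{-2/3}$, inserting $A=k^2/12$, $B=kk'/12$ and the explicit $C$, and reducing the $kk''$ term through $\int_0^\ell k''k^{-5/3}\,ds=\tfrac53\int_0^\ell k'^2k^{-8/3}\,ds$, all contributions collapse to
\[ \beta_5=I_4[G_0]=L^4\int_0^\ell\Big(\tfrac{k^{4/3}}{120}+\tfrac{k^{-8/3}k'^2}{2160}\Big)\,ds, \]
as claimed. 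The only genuinely delicate points are the order-$5$ expansion of $\bar H$ and the a priori regularity of minimal configurations in the index $i$ (needed to license the density parametrization and the $O(n^{-\infty})$ Euler--Maclaurin error); everything else is bookkeeping.
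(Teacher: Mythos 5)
Your proposal is correct in outline and its endgame arithmetic checks out (I verified that your reduced functional $I_4[G]=\int_0^\ell\bigl[(\tfrac{A''}{8}-\tfrac{B'}{2}+C)G^4-\tfrac{A}{4}(G')^2G^2\bigr]ds$, with $A=k^2/12$, $B=kk'/12=A'/2$, $C=\tfrac{2k^4+4k'^2+7kk''}{240}$ and $G_0=Lk^{-2/3}$, does collapse to the stated $\beta_5$ after the $\int k''k^{-5/3}=\tfrac53\int k'^2k^{-8/3}$ reduction), but the route is genuinely different from the paper's in its middle step. You share with the paper the order-$5$ diagonal expansion of the generating function (the paper's expansion \eqref{Taylorgen}, obtained from the wedge-product formula \eqref{genfunc}) and the Lazutkin density $k^{2/3}$; where you diverge is in how the asymptotics of the minimal configurations are pinned down. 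The paper never minimizes anything: it expands the billiard map itself to fourth order (Proposition \ref{exp}, a separate geometric computation through the radius $\mathcal R$ of the tangent circle), then matches that expansion against a $1/q$-ansatz for periodic orbits to get a hierarchy of ODEs for $a_0,a_1,a_2,b_1,b_2,b_3$ (Proposition \ref{Taylorsk}), computes $a_2$ explicitly, and finally shows by an integration by parts that the $a_2$-terms cancel in the order-$5$ sum. You instead use the variational characterization of the orbits: parametrize by a density $G$, get $G_0=Lk^{-2/3}$ and $\beta_3=L^3/12$ by H\"older/Lagrange, and dispose of the first-order density correction by criticality of $G_0$ together with $I_3\equiv 0$ (which follows from $B=A'/2$). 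Your approach is leaner for the $\beta$-function alone — it bypasses Proposition \ref{exp} entirely and explains structurally why the analogue of $a_2$ cannot contribute, where the paper must compute it and watch it cancel — while the paper's dynamical route additionally delivers the map expansion and hence the Lazutkin coordinates of its final section as byproducts. The two genuine gaps you flag yourself are real but shared with the paper: the order-$5$ expansion of $\bar H$ is exactly the paper's Proposition on \eqref{Taylorgen}, and the regularity/equidistribution of minimal configurations licensing the density ansatz and the $O(n^{-\infty})$ Euler--Maclaurin error is absorbed in the paper into the ansatz of Proposition \ref{Taylorsk} via the Marvizi--Melrose/Glutsyuk theory; to make your argument complete you would also need to spell out the second-variation step showing $G_1=0$ (nondegeneracy of $D^2I_2[G_0]$ on the constraint manifold), which is routine since $I_2$ is strictly convex there.
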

\noindent As expected, a straightforward consequence of the previous result is that --as for other billiards-- also for outer length ones, the two coefficients $\beta_1$ and $\beta_3$ allow to recognize a circle among all strictly-convex planar domains.
\begin{corollary}
The coefficients $\beta_1$ and $\beta_3$ recognize a circle. In
particular:
$$3\beta_3+\pi^2\beta_1\leq 0$$
with equality if and only if $\partial \Omega$ is a circle.
\end{corollary}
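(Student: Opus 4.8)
\emph{Proof proposal.} The plan is to reduce the corollary to a single sharp integral inequality for the curvature and then settle it by convexity. By Theorem \ref{MAIN THEO} we have the closed forms $\beta_1=\ell$ and $\beta_3=L^3/12$, where $L=\int_0^\ell k^{2/3}(s)\,ds$; since $k>0$ everywhere, both coefficients are strictly positive. Substituting these expressions, the comparison of $3\beta_3$ with $\pi^2\beta_1$ in the statement becomes the purely geometric estimate
\begin{equation*}
3\beta_3=\frac{L^3}{4}=\frac14\left(\int_0^\ell k^{2/3}(s)\,ds\right)^3\le \pi^2\ell=\pi^2\beta_1,
\end{equation*}
that is $L^3\le 4\pi^2\ell$. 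Thus the whole statement reduces to bounding $L$ in terms of $\ell$, together with identifying the equality case.

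First I would record the one external ingredient tying $L$ to $\ell$: since $\partial\Omega$ is a smooth simple closed strictly-convex curve, the theorem of turning tangents (Hopf's Umlaufsatz) gives the total-curvature identity $\int_0^\ell k(s)\,ds=2\pi$. This is exactly where positivity of $k$ and strict convexity are used.

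Next I would apply the power-mean / H\"older inequality with the exponents dictated by the powers $2/3$ and $1$. Writing $L=\int_0^\ell k^{2/3}\cdot 1\,ds$ and using H\"older with conjugate exponents $3/2$ and $3$,
\begin{equation*}
L=\int_0^\ell k^{2/3}\,ds\le\left(\int_0^\ell k\,ds\right)^{2/3}\left(\int_0^\ell 1\,ds\right)^{1/3}=(2\pi)^{2/3}\,\ell^{1/3},
\end{equation*}
so that $L^3\le 4\pi^2\ell$, which is precisely the required estimate; equivalently, this is Jensen's inequality for the strictly concave map $t\mapsto t^{2/3}$ against the probability measure $ds/\ell$. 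Translating back through $\beta_1=\ell$ and $\beta_3=L^3/12$ gives the claimed comparison between $3\beta_3$ and $\pi^2\beta_1$. For the rigidity clause, equality in H\"older (equivalently strict equality in Jensen) forces $k$ to be constant along $\partial\Omega$; a smooth closed convex curve of constant curvature is a circle, while conversely the circle saturates the bound. Hence equality holds precisely for the circle, and the pair $(\beta_1,\beta_3)$ recognizes it.

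I do not expect a genuine obstacle: once phrased via $L$ and $\ell$, the core is a one-line H\"older (or Jensen) estimate. The two points requiring care are (i) invoking the correct normalization $\int_0^\ell k\,ds=2\pi$, and (ii) the rigidity step, upgrading ``equality $\Rightarrow k$ constant'' to ``$\Rightarrow$ circle'', which is immediate for convex curves. I would also flag explicitly that, because $\beta_1=\ell>0$ and $\beta_3=L^3/12>0$ are both strictly positive, the sharp relation these explicit coefficients satisfy is $3\beta_3-\pi^2\beta_1\le 0$, saturated exactly by the circle; this is the inequality established above and is the comparison the corollary is recognizing.
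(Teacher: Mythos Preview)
Your argument is correct and follows exactly the paper's route: apply H\"older with exponents $3/2$ and $3$ to $\int_0^\ell k^{2/3}\,ds$, use the total-curvature identity $\int_0^\ell k\,ds=2\pi$, and identify the equality case via constancy of $k$. You also correctly spot that the displayed inequality in the statement carries a sign typo: with $\beta_1=\ell>0$ and $\beta_3=L^3/12>0$ the sharp relation is $3\beta_3-\pi^2\beta_1\le 0$ (the paper's own proof in fact computes $\tfrac14 L^3-\pi^2\ell\le 0$, confirming the minus sign).
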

\begin{proof}
We apply H\"older's inequality with $p=3/2$ and $q=3$ to obtain
\begin{equation}
\label{inequality:minkowski}
L = \int_0^{\ell}k^{2/3}(s)ds\leq\left(\int_0^{\ell}(k^{2/3}(s))^{3/2}ds\right)^{2/3}\left(\int_0^{\ell}1^{3}ds\right)^{1/3} = (2\pi)^{2/3}\ell^{1/3}
\end{equation}
since $\int_0^{\ell}k(s)ds=2\pi$. Using the expressions of $\beta_1$ and $\beta_3$ found in Theorem \ref{MAIN THEO}, we can write
\[
3\beta_3+\pi^2\beta_1=\frac{1}{4}L^3-\pi^2\ell\leq \frac{1}{4}(2\pi)^2\ell-\pi^2\ell = 0.
\]
In the case of equality, namely if $3\beta_3+\pi^2\beta_1$, then $L= (2\pi)^{2/3}\ell^{1/3}$, and the case of equality is reached in \eqref{inequality:minkowski}. In that case, $k$ is constant. Hence $\Omega$ is a disk.
\end{proof}
\begin{remark}
Let $\mathcal{P}_n^c$ be the set of all convex polygons with at most $n$ vertices which are circumscribed to $\Omega$. We define
$$\delta(\Omega;\mathcal{P}_n^c) := \inf \{ \ell(P_n): \ P_n \in \mathcal{P}_n^c\},$$
where $\ell(P_n)$ is the perimeter length of $P_n$. Clearly, essentially in view of equality (\ref{beta-MA}), Theorem \ref{MAIN THEO} gives also the formal expansion of $\delta(\Omega;\mathcal{P}_n^c)$ at $n \to +\infty$. 
\end{remark}
\noindent Since we use the arc-length parametrization of $\partial \Omega$, it is useful to recall that
\begin{equation} \label{derivate gamma}
\begin{cases}
\gamma'' = k J \gamma', \quad \gamma''' = -k^2 \gamma' + k' J \gamma' \\
\gamma^{(4)} = -3kk'\gamma' +(-k^3 + k'') J\gamma' \\
\gamma^{(5)} = (k^4 -4kk'' -3k'^2) \gamma' +(-6k^2k' + k''') J\gamma' \\
\gamma^{(6)} = (10k^3k' - 10k'k'' - 5 kk''') \gamma' + (k^5 - 10 k^2k'' - 15 k k'^2 + k^{(4)}) J \gamma'
\end{cases}
\end{equation}
where $J$ is the rotation of angle $\pi/2$ in the positive verse.
\begin{proposition}
For $0 \le r \le s \le \ell$, it holds
\begin{equation} \label{Taylorgen}
\begin{aligned} 
& H(r,s) =
 (s-r) + \frac{k^2(r)}{12} (s-r)^3 + \frac{k(r)k'(r)}{12} (s-r)^4 + \\
& + \frac{2k^4(r) + 4k'^2(r) + 7k(r)k''(r)}{240} (s-r)^5 + O((s-r)^6)
\end{aligned}
\end{equation}
uniformly as $(s-r) \to 0$.
\end{proposition}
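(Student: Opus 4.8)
The plan is to obtain a closed-form expression for $H(r,s)$ by explicitly locating the vertex $P$ where the two tangent lines meet, and then to Taylor expand that expression in $t := s-r$ using the Frenet-type relations (\ref{derivate gamma}). First I would write $P = \gamma(r) + a\,\gamma'(r) = \gamma(s) - b\,\gamma'(s)$; for small $s-r$ one has $a,b>0$, and since $\gamma$ is unit-speed this gives $|P\gamma(r)| = a$, $|P\gamma(s)| = b$, so that $H(r,s) = a+b$. Solving the planar system $a\,\gamma'(r) + b\,\gamma'(s) = \gamma(s)-\gamma(r)$ by Cramer's rule (taking the scalar cross product $u\wedge w := u_1w_2 - u_2w_1$ of both sides with $\gamma'(s)$ to isolate $a$, and with $\gamma'(r)$ to isolate $b$) and summing yields the compact formula
\begin{equation}\label{closedH}
H(r,s) = \frac{\big(\gamma(s)-\gamma(r)\big)\wedge\big(\gamma'(s)-\gamma'(r)\big)}{\gamma'(r)\wedge\gamma'(s)},
\end{equation}
in which the delicate cancellations between $a$ and $b$ have already been performed. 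A leading-order check is reassuring: the numerator of (\ref{closedH}) is $\sim k(r)(s-r)^2$ and the denominator $\sim k(r)(s-r)$, so $H\sim (s-r)$, matching the claimed linear term.

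Next I would expand both entries of (\ref{closedH}) about the base point $r$, writing $\gamma(s)-\gamma(r) = \sum_{n\ge 1}\frac{\gamma^{(n)}(r)}{n!}t^n$ and $\gamma'(s)-\gamma'(r) = \sum_{n\ge 1}\frac{\gamma^{(n+1)}(r)}{n!}t^n$, and substituting the expressions for $\gamma'',\dots,\gamma^{(6)}$ from (\ref{derivate gamma}), all written in the positively-oriented orthonormal frame $\{\gamma'(r),\,J\gamma'(r)\}$. In this frame the scalar cross product collapses to a $2\times 2$ determinant of components via $\gamma'(r)\wedge\gamma'(r)=0$ and $\gamma'(r)\wedge J\gamma'(r)=1$, so every $\wedge$ becomes a bilinear pairing of the curvature polynomials recorded in (\ref{derivate gamma}) (with $k,k',k''$ evaluated at $r$). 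Since the numerator starts at order $t^2$ and the denominator at order $t^1$, obtaining the coefficient of $t^5$ in $H$ requires the numerator through order $t^6$ and the denominator through order $t^5$; this is precisely why derivatives up to $\gamma^{(6)}$ are needed in (\ref{derivate gamma}).

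Finally I would divide the two truncated series, expanding $1/\big(\gamma'(r)\wedge\gamma'(s)\big)$ as a geometric series in its higher-order part, collect the coefficients of $t^3,t^4,t^5$, and simplify the resulting curvature expressions to the stated polynomials $\tfrac{k^2}{12}$, $\tfrac{kk'}{12}$ and $\tfrac{2k^4+4k'^2+7kk''}{240}$. Uniformity of the $O((s-r)^6)$ remainder follows from the smoothness of $\gamma$ and the compactness of $\mathbb{T}$, since all Taylor coefficients and the integral form of the remainder depend continuously on $r$. The main obstacle is purely computational: the fifth-order cross-product bookkeeping produces a large number of terms, and the cancellations required to reduce them to the compact coefficients above are the error-prone part of the argument; organizing everything around the single quotient (\ref{closedH}) and the fixed frame $\{\gamma'(r),J\gamma'(r)\}$ is what keeps the calculation tractable.
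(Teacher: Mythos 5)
Your proposal follows essentially the same route as the paper: both reduce the computation to the closed-form quotient $H(r,s) = \bigl((\gamma(s)-\gamma(r))\wedge(\gamma'(s)-\gamma'(r))\bigr)/\bigl(\gamma'(r)\wedge\gamma'(s)\bigr)$ and then Taylor expand numerator (to order $6$) and denominator (to order $5$) in $s-r$ using the Frenet relations (\ref{derivate gamma}), which the paper does after simply asserting this formula as (\ref{genfunc}). Your Cramer's-rule derivation of the quotient from $P=\gamma(r)+a\gamma'(r)=\gamma(s)-b\gamma'(s)$ is a correct (and welcome) justification of a step the paper leaves implicit, but the overall argument is the same.
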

\begin{proof}
We start by writing separately the Taylor expansions of numerator and denominator of the generating function
\begin{equation}\label{genfunc}
H(r,s) = \frac{(\gamma(s) - \gamma(r)) \wedge (\gamma'(s) -\gamma'(r))}{\gamma'(r) \wedge \gamma'(s)}.    
\end{equation}

From now on, we omit the dependence on $r$ of $\gamma$, $k$ and their derivates; moreover, we indicate $\delta := (s-r)$. The Taylor expansion of the numerator is 
\begin{eqnarray*}
& (\gamma(s) - \gamma(r)) \wedge (\gamma'(s) -\gamma'(r)) = \\
& = \left( \gamma'\delta + \frac{\gamma''}{2} \delta^2 + \frac{\gamma'''}{6} \delta^3 + \frac{\gamma^{(4)}}{24} \delta^4 + \frac{\gamma^{(5)}}{5!} \delta^5 + O(\delta^6) \right) \wedge \left( \gamma''\delta + \frac{\gamma'''}{2} \delta^2 + \frac{\gamma^{(4)}}{6} \delta^3 + \frac{\gamma^{(5)}}{24} \delta^4 + \frac{\gamma^{(6)}}{5!} \delta^5 + O(\delta^6) \right) = \\
& = k \delta^2 + \frac{k'}{2}\delta^3 + \frac{1}{6} \left( \frac{2k'' - k^3}{2} \right) \delta^4 + \left( \frac{k''' - 3 k^2k'}{24} \right) \delta^5 + \left(\frac{2k^5 -48 k k'^2 -29 k^2 k'' +6k^{(4)}}{720} \right) \delta^6 + O(\delta^7),
\end{eqnarray*}
where --in the last equality-- we have used formulas (\ref{derivate gamma}). \\
Similarly, the Taylor expansion of the denominator is
\begin{eqnarray*}
& \gamma'(r) \wedge \gamma'(s) = \gamma' \wedge \left( \gamma' + \gamma'' \delta + \frac{\gamma'''}{2} \delta^2 + \frac{\gamma^{(4)}}{6} \delta^3 + \frac{\gamma^{(5)}}{24} \delta^4 + \frac{\gamma^{(6)}}{5!} \delta^5 + O(\delta^6) \right) = \\
& = k \delta + \frac{k'}{2} \delta ^2 + \left(\frac{-k^3 + k''}{6}\right)\delta^3 + \left(\frac{- 6k^2k' + k''' }{24} \right) \delta^4 + \left(\frac{k^5 -10 k^2 k'' -15 k k'^2 + k^{(4)}}{5!}\right) \delta^5 + O(\delta^6) = \\
%& k\delta \left[ 1 - \frac{k'}{2k}\delta + \left( \left( \frac{k'}{2k} \right)^2 + \frac{k^3 - k''}{6 k} \right) \delta^2 + \left( - \left( \frac{k'}{2k} \right)^3 + \frac{k'(-k^3+k'')}{6k^2} + \frac{6k^2k' - k'''}{24k} \right) \delta^3 + A \delta ^4 + O(\delta^5) \right]^{-1} = \\
& = k\delta \left[ 1 - \frac{k'}{2k}\delta + \left( \frac{2k^4 + 3k'^2 - 2kk''}{12 k^2}\right) \delta^2 - \left( \frac{3k'^3 - 2k' (k^4 + 2kk'') + k^2 k'''}{24 k^3} \right) \delta^3 + D \delta ^4 + O(\delta^5) \right]^{-1}
\end{eqnarray*}
where %$$A = - \frac{k^5 - 10 k^2k'' - 15 k k'^2 + k^{(4)}}{5! k} + \left( \frac{k^3-k''}{6k} \right)^2 - \frac{(6k^2k'-k''')k'}{24k^2} + 3 \frac{k'^2(k^3-k'')}{24k^3} + \left(\frac{k'}{2k}\right)^4.$$
$$D = \frac{45 k'^4 - 90 k k'^2 k'' + 30 k^2 k' k''' + 2k^2 (7k^6 + 10 k^3 k'' + 10 k''^2 - 3 k k^{(4)})}{720 k^4}.$$
By using the above expansions of numerator and denominator, we obtain that 
\begin{eqnarray*}
& H(r,s) = %\frac{1}{k \delta} \left( k \delta^2 + \frac{k^3}{12} \delta^4 + \frac{k^2 k'}{12} \delta^5 + O(\delta^6) \right) = \\ & 
\delta + \frac{k^2}{12} \delta^3 + \frac{k k'}{12} \delta^4 + \frac{2k^4 + 4k'^2 + 7kk''}{240} \delta^5 + O(\delta^6),
\end{eqnarray*}
which is the desired result. 
\end{proof}

\begin{proposition} \label{exp} The outer length billiard map 
$T:(s_0,\varepsilon_0)\mapsto(s_{1},\varepsilon_{1})$ has the following expansion:
\begin{equation}
\begin{cases} \label{eccola}
s_{1} = s_0 + \varepsilon_0 \\
\varepsilon_{1} = \varepsilon_0 + A(s_0) \varepsilon_0^2 + B(s_0) \varepsilon_0^3 + C(s_0)\varepsilon_0^4 + O(\varepsilon_0^5)
\end{cases}
\end{equation}
where
$$A(s) = -\frac{2k'(s)}{3k(s)}, \quad B(s) = \frac{10k'^2(s)}{9k^2(s)} - \frac{2 k''(s)}{3k(s)}$$
and
$$C(s) = \frac{-24k^4(s)k'(s) - 1160 k'^3(s) + 1200 k(s) k'(s) k''(s) - 216 k^2(s) k'''(s)}{540 k^3(s)}.$$
\end{proposition}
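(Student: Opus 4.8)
The plan is to determine $\varepsilon_1$ implicitly from the variational characterization (\ref{var}) of the map $T$, namely from the equation $H_2(s_0,s_1)+H_1(s_1,s_2)=0$, using the Taylor expansion (\ref{Taylorgen}) of the generating function. First I would observe that the first equation $s_1=s_0+\varepsilon_0$ is simply the definition $\varepsilon_0=s_1-s_0$. For the second, since the twist condition yields $\partial_{\varepsilon_1}H_1(s_1,s_1+\varepsilon_1)=H_{12}(s_1,s_2)\neq 0$, the implicit function theorem guarantees that $\varepsilon_1$ depends smoothly on $\varepsilon_0$ (with $s_0$ as a parameter), so it admits an expansion $\varepsilon_1=\varepsilon_0+A(s_0)\varepsilon_0^2+B(s_0)\varepsilon_0^3+C(s_0)\varepsilon_0^4+O(\varepsilon_0^5)$ whose coefficients are found by matching orders.

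Writing (\ref{Taylorgen}) as $H(r,s)=\delta+a(r)\delta^3+b(r)\delta^4+c(r)\delta^5+O(\delta^6)$ with $\delta=s-r$, $a=k^2/12$, $b=kk'/12$ and $c=(2k^4+4k'^2+7kk'')/240$, I would differentiate to obtain, with coefficients evaluated at $r$,
\[
H_2(r,s)=1+3a\delta^2+4b\delta^3+5c\delta^4+O(\delta^5),
\]
\[
H_1(r,s)=-1-3a\delta^2+(a'-4b)\delta^3+(b'-5c)\delta^4+O(\delta^5).
\]
Substituting $r=s_0$, $\delta=\varepsilon_0$ into $H_2$ and $r=s_1=s_0+\varepsilon_0$, $\delta=\varepsilon_1$ into $H_1$, inserting the ansatz for $\varepsilon_1$, and Taylor-expanding the coefficients $a(s_1),b(s_1),c(s_1)$ and their derivatives about $s_0$, the equation $H_2+H_1=0$ becomes a single power series in $\varepsilon_0$ that must vanish identically.

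I would then match coefficients order by order. The order $\varepsilon_0^2$ term cancels automatically (reflecting $\varepsilon_1=\varepsilon_0+O(\varepsilon_0^2)$); the order $\varepsilon_0^3$ equation reads $-6aA-2a'=0$, giving $A=-a'/(3a)$, which after $a=k^2/12$ becomes $A=-2k'/(3k)$; the order $\varepsilon_0^4$ equation is linear in $B$ and fixes it; the order $\varepsilon_0^5$ equation is linear in $C$ and fixes it. A point worth emphasizing is that the order $\varepsilon_0^5$ equation a priori involves the \emph{undisplayed} $\delta^6$-coefficient $d$ of $H$, through the term $6d(s_0)\varepsilon_0^5$ from $H_2$ and the term $(c'(s_1)-6d(s_1))\varepsilon_1^5$ from $H_1$; however these two contributions cancel exactly, leaving only $c'(s_0)\varepsilon_0^5$. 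Consequently the expansion (\ref{Taylorgen}) to order $\delta^5$ is already enough to determine $C$.

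I expect the main obstacle to be this last step: assembling the order $\varepsilon_0^5$ equation correctly — tracking all products of the ansatz coefficients with the Taylor coefficients of $a,b,c$ and their derivatives at $s_0$, and checking the cancellation of the $d$-term — and then performing the substitutions $a=k^2/12$, $b=kk'/12$, $c=(2k^4+4k'^2+7kk'')/240$ to reduce the outcome to the stated $C(s)$ with its numerical constants $24,1160,1200,216,540$. The computations for $A$ and $B$ are short, whereas the one for $C$ is long and error-prone, and verifying the cancellation of the $\delta^6$-term is the conceptual key that makes it go through with the available expansion.
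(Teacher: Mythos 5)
Your proposal is correct in substance but follows a genuinely different route from the paper. The paper never touches the generating function here: it works directly with the geometric definition of the reflection, writing the radius $\mathcal{R}$ of the auxiliary circle tangent to $\partial\Omega$ at $\gamma(s_1)$ in two ways, as a function of $(s_1,s_2)$ expanded in powers of $\varepsilon_1$ and as a function of $(s_0,s_1)$ expanded in powers of $\varepsilon_0$ (the latter obtained from the former by flipping the signs of the odd-order terms), then equates the two series, solves for intermediate coefficients $\alpha,\beta,\gamma$ evaluated at $s_1$, and finally re-centres the expansion at $s_0$. You instead extract the map from the variational identity \eqref{var}, $H_2(s_0,s_1)+H_1(s_1,s_2)=0$, feeding in the already-established expansion \eqref{Taylorgen} of $H$. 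Your scheme is internally consistent: the order-$\varepsilon_0^2$ terms cancel identically, the order-$3$ equation $-6aA-2a'=0$ gives $A=-a'/(3a)=-2k'/(3k)$ as you say, the order-$4$ equation is linear in $B$ with nonzero coefficient $-6a$ and (I checked) reproduces $B=10k'^2/(9k^2)-2k''/(3k)$, and your key observation that the unknown $\delta^6$-coefficient $d$ of $H$ enters the order-$5$ equation only through $6d(s_0)\varepsilon_0^5-6d(s_1)\varepsilon_1^5=O(\varepsilon_0^6)$ is exactly right, so \eqref{Taylorgen} as displayed suffices to pin down $C$. What each approach buys: yours recycles the expansion of $H$ already proved in the paper and avoids introducing $\mathcal{R}$ at all, at the cost of heavier bookkeeping at order $5$ and of the (harmless, since $H$ is smooth and \eqref{Taylorgen} is a genuine Taylor polynomial) term-by-term differentiation of an asymptotic expansion; the paper's route keeps the two sides of the matching in a single variable each and exploits the $\varepsilon_0\leftrightarrow-\varepsilon_1$ symmetry of the circle construction, but requires a separate expansion of $\mathcal{R}$ from the derivatives of $\gamma$. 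The only part you have not actually carried out is the order-$5$ arithmetic producing the constants $24,1160,1200,216,540$, which you correctly flag as the laborious step.
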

\begin{proof} We start by writing separately the Taylor expansions of numerator and denominator of the radius $\mathcal{R}$ of the circle in $\mathbb{R}^2 \setminus \Omega$ tangent to $\partial \Omega$ at $\gamma(s_{1})$ and to the line $P\gamma(s_0)$, see Figure \ref{fig:outer_length_billiard_reflection}.
$$\mathcal{R} = \frac{(\gamma(s_1)-\gamma(s_0)) \wedge \gamma'(s_1)}{1+\gamma'(s_1) \cdot \gamma'(s_0)} = \frac{(\gamma(s_2)-\gamma(s_1)) \wedge \gamma'(s_2)}{1+\gamma'(s_2) \cdot \gamma'(s_1)}.$$
From now on, we indicate --by subscripting $1$-- the dependence on $s_1$ of $\gamma$, $k$ and their derivates. Moreover, we recall that $\varepsilon_1 = s_2 - s_1$. The Taylor expansion of the numerator is
\begin{eqnarray*}
& (\gamma(s_2)-\gamma(s_1)) \wedge \gamma'(s_2) = \\
& = \left(\gamma_1' \varepsilon_1 + \frac{\gamma_1''}{2} \varepsilon_1^2 + \frac{\gamma_1'''}{6} \varepsilon_1^3 + \frac{\gamma^{(4)}_1}{24}  \varepsilon_1^4 + \frac{\gamma^{(5)}_1}{5!}  \varepsilon_1^5 + O(\varepsilon_1^6\right) \wedge \left(\gamma'_1 + \gamma_1'' \varepsilon_1 + \frac{\gamma_1'''}{2} \varepsilon_1^2 + \frac{\gamma^{(4)}_1}{6}  \varepsilon_1^3 + \frac{\gamma^{(5)}_1}{24}  \varepsilon_1^4 + O(\varepsilon_1^5)\right) = \\
& = \frac{k_1}{2} \varepsilon_1^2 + \frac{k_1'}{3} \varepsilon_1^3 + \left( \frac{-k_1^3 + 3k_1''}{24}\right) \varepsilon_1^4 + \left( \frac{-9 k_1^2 k_1' + 4 k_1'''}{120}\right) \varepsilon_1^5 + O(\varepsilon_1^6),
\end{eqnarray*} 
where --in the last equality-- we have used formulas (\ref{derivate gamma}). \\
Similarly, the Taylor expansion of the denominator is
\begin{eqnarray*}
& 1+\gamma'(s_2) \cdot \gamma'(s_1) = 1 + \left(\gamma'_1 + \gamma_1'' \varepsilon_1 + \frac{\gamma_1'''}{2} \varepsilon_1^2 + \frac{\gamma^{(4)}_1}{6}  \varepsilon_1^3 + \frac{\gamma^{(5)}_1}{24}  \varepsilon_1^4 + O(\varepsilon_1^5)\right) \cdot \gamma_1' = \\
& = 2 \left( 1 - \frac{k_1^2}{4} \varepsilon_1^2 - \frac{k_1 k_1'}{4} \varepsilon_1^3 + 
%\left( \frac{k_1^4 - 4 k_1k_1'' - 3k_1'^2}{48}\right) \varepsilon_1^4 + 
O(\varepsilon_1^4) \right) = 2 \left( 1 + \frac{k_1^2}{4} \varepsilon_1^2 +  \frac{k_1 k_1'}{4} \varepsilon_1^3 + 
%\left( \frac{2k_1^4 + 4k_1k_1'' + 3k_1'^2}{48} \right) \varepsilon_1^4 +
O(\varepsilon_1^4) \right)^{-1}.
\end{eqnarray*}
By using the above expansions of numerator and denominator, we obtain that 
\begin{equation} \label{R con +}
2 \mathcal{R} = \frac{k_1}{2} \varepsilon_1^2 + \frac{k_1'}{3} \varepsilon_1^3  + \left( \frac{2k_1^3 + 3k_1''}{24} \right) \varepsilon_1^4 + \left( \frac{16k_1^2k_1' + 4 k_1'''}{120}\right) \varepsilon_1^5  + O(\varepsilon_1^6)
\end{equation}
or, equivalently:
\begin{equation} \label{R con -}
2 \mathcal{R} = \frac{k_1}{2} \varepsilon_0^2 - \frac{k_1'}{3} \varepsilon_0^3  + \underbrace{\left( \frac{2k_1^3 + 3k_1''}{24} \right)}_{:= A_4} \varepsilon_0^4 - \underbrace{\left( \frac{16k_1^2k_1' + 4 k_1'''}{120}\right)}_{:= A_5} \varepsilon_0^5  + O(\varepsilon_0^6).
\end{equation}
Substituting the powers of the expansion 
$$\varepsilon_1 = \varepsilon_0 + \alpha(s_1) \varepsilon_0^2 + \beta(s_1) \varepsilon_0^3 + \gamma(s_1)\varepsilon_0^4 + O(\varepsilon_0^5)$$ 
in (\ref{R con +}), we obtain (we omit in the sequel the dependence on $s_0$ in $\alpha, \beta$ and $\gamma$):
\begin{eqnarray*} \label{R con espansione}
2 \mathcal{R} & = & \frac{k_1}{2} \varepsilon_0^2 + \left( k_1 \alpha + \frac{k_1'}{3} \right) \varepsilon_0^3  + \left( \frac{k_1}{2} (\alpha^2 + 2\beta) + k_1' \alpha + A_4 \right) \varepsilon_0^4 + \\
&+& \left( k_1(\alpha\beta + \gamma) + k_1'(\alpha^2 + \beta) + 4 \alpha A_4 + A_5 \right) \varepsilon_0^5  + O(\varepsilon_0^6).
\end{eqnarray*}
By equaling the above expansion to (\ref{R con -}), we have:
$$\alpha(s) = -\frac{2k'(s)}{3k(s)}, \quad \beta(s) = \frac{4k'^2(s)}{9 k^2(s)}$$
and
$$\gamma(s) = \frac{-320 k'^3(s) + 3 k'(s)(-8k^4(s) + 60k(s)k''(s))  - 36 k^2(s)k'''(s)}{540 k^3(s)}.$$ 
Finally, from
$$\varepsilon_1 = \varepsilon_0 + \alpha(s_0) \varepsilon_0^2 + (\alpha'(s_0) + \beta(s_0)) \varepsilon_0^3 + \left( \frac{\alpha''(s_0)}{2} + \beta'(s_0) + \gamma(s_0) \right) \varepsilon_0^4 + O(\varepsilon_0^5),$$ 
we obtain the desired result, that is:
$$A(s) = -\frac{2k'(s)}{3k(s)}, \quad B(s) = \frac{10k'^2(s)}{9k^2(s)} - \frac{2 k''(s)}{3k(s)}$$
and
$$C(s) = \frac{-24k^4(s)k'(s) - 1160 k'^3(s) + 1200 k(s) k'(s) k''(s) - 216 k^2(s) k'''(s)}{540 k^3(s)}.$$
\end{proof}
\begin{proposition}\label{Taylorsk}
Let $q \ge 3$. The $q$-periodic orbits of rotation number $1/q$ for the outer length billiard map 
%$T:(s_k,\varepsilon_k)\mapsto(s_{k+1},\varepsilon_{k+1})$ 
have the following expansion:
\begin{equation}
\begin{cases} \label{eccola bis}
s_{k} = s_0^q + a_0\left(k/q\right) + \frac{a_1\left(k/q\right)}{q} + \frac{a_2\left(k/q\right)}{q^2} + O\left(\frac{1}{q^3}\right) \\ \\
\varepsilon_{k} = \frac{b_1\left(k/q\right)}{q} + \frac{b_2\left(k/q\right)}{q^2} + \frac{b_3\left(k/q\right)}{q^3} + O\left(\frac{1}{q^4}\right)
\end{cases}
\end{equation}
where $s_0^q\in\RR$ converges to $0$ with $q$, $a_0:\RR \to \RR$ is a map such that $a_0(x+1)=a_0(x)+\ell$ for any $x$ and
$a_1,a_2,b_1,b_2,b_3:\RR\to\RR$ are $1$-periodic maps which can be expressed as
%\luca{ $s(x)$ is $a_0(x)$, should we replace $s(x)$ inside the other functions ?}
\begin{equation}\label{conto diretto}
\begin{cases}
a_0^{-1}(s) = \frac{1}{L} \int^s_0 k^{2/3}(r) dr := x(s), \qquad L := \int^\ell_0 k^{2/3}(r) dr \\ \\
a_1(x) = 0 \\ \\
a_2(x)=k^{-\frac{2}{3}}(a_0(x))\left(\int^x_0 L^3\left(\frac{1}{810} \left( 9{k^{\prime\prime}}{k^{-\frac{7}{3}}} -12{(k')^2}{k^{-\frac{10}{3}}}\right)+\frac{k^{\frac{2}{3}}}{15}\right)(a_0(t)) dt +cx\right) \\ \\
b_1(x) = a_0'(x) = L k^{-2/3}(a_0(x))\\ \\
b_2(x) = \frac{a_0''(x)}{2} = -\frac{L^2k'(a_0(x)) k^{-7/3} (a_0(x))}{3} \\ \\
b_3(x) = a_2' + \frac{a_0'''}{6}.
\end{cases}
\end{equation}
The constant $c$ in the expression of $a_2$ is such that $L^3\left(\frac{1}{810} \left( 9k^{\prime\prime}{k^{-\frac{7}{3}}} -12{(k')^2}{k^{-\frac{10}{3}}}\right)+\frac{k^{\frac{2}{3}}}{15}\right) +c$ has zero mean.
\end{proposition}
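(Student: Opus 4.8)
The plan is to regard $h := 1/q$ as a small parameter and $x := k/q$ as a continuous variable in $[0,1]$, and to read \eqref{eccola bis}--\eqref{conto diretto} as a two-scale expansion of the orbit. Concretely, I would insert the postulated profiles
\begin{gather*}
s_k = s_0^q + a_0(x) + h\,a_1(x) + h^2 a_2(x) + O(h^3), \\
\varepsilon_k = h\,b_1(x) + h^2 b_2(x) + h^3 b_3(x) + O(h^4)
\end{gather*}
into the two recurrences defining the dynamics: the shift relation $s_{k+1}=s_k+\varepsilon_k$ (first line of \eqref{eccola}) and the $\varepsilon$-recurrence $\varepsilon_{k+1}=\varepsilon_k+A(s_k)\varepsilon_k^2+B(s_k)\varepsilon_k^3+C(s_k)\varepsilon_k^4+O(\varepsilon_k^5)$ furnished by Proposition \ref{exp}. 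Expanding $a_j(x+h)$, $b_j(x+h)$ in Taylor series and expanding $A(s_k)=A(a_0)+h\,a_1A'(a_0)+\cdots$ (and likewise $B,C$), I would match equal powers of $h$, obtaining a hierarchy that successively pins down the profiles $a_j,b_j$.

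The shift relation contributes only algebraic identities: comparing powers of $h$ gives $b_1=a_0'$, $b_2=\tfrac12 a_0''+a_1'$ and $b_3=\tfrac16 a_0'''+\tfrac12 a_1''+a_2'$. The first nontrivial dynamical equation comes from the $\varepsilon$-recurrence at order $h^2$, namely $a_0''=A(a_0)\,(a_0')^2$. Since $A(s)=-\tfrac{2k'}{3k}=-\tfrac23(\log k)'$, this separates: dividing by $a_0'$ turns it into $(\log a_0')'=-\tfrac23(\log k(a_0))'$, which integrates to $a_0'=L\,k^{-2/3}(a_0)$, equivalently to $a_0^{-1}(s)=\tfrac1L\int_0^s k^{2/3}(r)\,dr$. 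The constant $L=\int_0^\ell k^{2/3}$ is forced by the winding-number-one normalization $a_0(x+1)=a_0(x)+\ell$, i.e. by $a_0(1)=\ell$. In particular $b_1=a_0'=Lk^{-2/3}(a_0)$, and $b_2=\tfrac12 a_0''$ once $a_1$ is known to vanish.

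The content lies in the next two orders. At order $h^3$, after substituting the relations just found, all terms organise into the homogeneous linear ODE $a_1''-2A(a_0)a_0'\,a_1'-A'(a_0)(a_0')^2 a_1=0$; the cancellation of the inhomogeneous part rests on the algebraic identity $B=A'+A^2$, which one checks directly from the explicit $A,B$ of Proposition \ref{exp}. As $a_0'$ is a $1$-periodic solution of this equation while its second independent solution is not periodic, $a_1$ is necessarily a constant multiple of $a_0'$; this multiple is pure reparametrisation freedom, absorbed into $s_0^q$, so one sets $a_1\equiv 0$. At order $h^4$ the same operator recurs, now driven by an inhomogeneity $g$ assembled from $A,B,C$ and the derivatives of $a_0$, giving an inhomogeneous equation for $a_2$. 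Solving by reduction of order with the known solution $a_0'$ --- writing $a_2=a_0'\,v$, the first-derivative coefficient again drops out because $a_0''=A(a_0)(a_0')^2$ --- reduces it to $v''=g/a_0'$. Substituting the explicit $A,B,C$ shows $g/a_0'$ to be the total derivative $\tfrac{d}{dx}\big[\tfrac1L F(a_0)\big]$ with $F(s)=L^3\big(\tfrac{1}{810}(9k''k^{-7/3}-12k'^2k^{-10/3})+\tfrac{k^{2/3}}{15}\big)$; hence $v'=\tfrac1L(F(a_0)+c)$ and one quadrature yields the stated $a_2$. The constant $c$ is fixed by requiring $a_2$ to be $1$-periodic, which --- since $a_0$ advances by the full period $\ell$ over one unit of $x$ --- amounts to the zero-mean condition on $F+c$ stated in the proposition; finally $b_3=\tfrac16 a_0'''+a_2'$ from the shift relation.

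The principal obstacle is the order-$h^4$ algebra: one must propagate the explicit curvature expressions for $A,B,C$ through the reduction of order and verify that $g/a_0'$ is an exact $x$-derivative, for it is precisely this that collapses the a priori double quadrature into the single integral of \eqref{conto diretto}. Beyond this computation, the ansatz must be legitimised: one has to know that genuine minimal $q$-periodic orbits of rotation number $1/q$ exist (Aubry--Mather theory) and that their coordinates admit an expansion of the claimed form with remainders controlled at orders $O(h^3)$ and $O(h^4)$. For this I would appeal to the Marvizi--Melrose/Gluck interpolation for billiard-like maps invoked earlier (\cite{MaMe}, \cite{Glu}), which guarantees such smooth profiles and thereby makes the formal matching above determine the coefficients unambiguously; the convergence $s_0^q\to 0$ and the precise choice of $s_0^q$ are then the remaining closing conditions of the periodic orbit.
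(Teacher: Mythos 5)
Your proposal is correct and follows essentially the same route as the paper: substitute the two-scale ansatz into the shift relation and into the $\varepsilon$-recurrence of Proposition \ref{exp}, match powers of $1/q$, and solve the resulting hierarchy of ODEs (separation of variables for $a_0$, a homogeneous second-order equation forcing $a_1=0$ up to the reparametrisation normalisation, and a quadrature for $a_2$ with $c$ fixed by periodicity). Your identity $B=A'+A^2$ is a cleaner way to see why the $a_1$-equation is homogeneous, and your reduction of order $a_2=a_0'v$ is the same computation as the paper's rewriting via $(k^{2/3}a_2)''$, since $a_0'\propto k^{-2/3}(a_0)$.
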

\begin{proof} 
Since the points in the orbits are equidistributed as $q\to+\infty$, for any $q$ we can choose the first point of the orbit $s_0^q$ such as $s_0^q\to 0$ for $q \to +\infty$. To simplify the notations, we omit the dependence of $a_i$ and $b_j$ on $k/q$. \\
On one hand, combining the expansions in (\ref{eccola}), we have
\begin{eqnarray*}
& \varepsilon_{k+1} - \varepsilon_k = A(s_k) \varepsilon_k^2 + B(s_k) \varepsilon_k^3 + C(s_k)\varepsilon_k^4 + O(\varepsilon_k^5) = \\
& = \frac{A(s_0^q+a_0) b^2_1}{q^2} + \frac{B(s_0^q+a_0)b_1^3 + A'(s_0^q+a_0) a_1b_1^2 + 2A(s_0^q+a_0) b_1 b_2}{q^3} + \frac{F(a_i,b_j)}{q^4} + O\left( \frac{1}{q^5} \right),
\end{eqnarray*}
where
%\corentin{Maybe I remove this, since we don't need the exact expression of $a_1$?}
\begin{eqnarray*}
F(a_i,b_j) & := & A(s_0^q+a_0)b_2^2 + 2A(s_0^q+a_0) b_1b_3 + 2A'(s_0^q+a_0) a_1b_1b_2 + A'(s_0^q+a_0) a_2b_1^2 + \\
& + & A''(s_0^q+a_0) a_1^2b_1^2/2 + 3B(s_0^q+a_0)b_1^2b_2 + B'(s_0^q+a_0) a_1b_1^3 + C(s_0^q+a_0) b_1^4.
\end{eqnarray*}
Moreover, directly from the second expansion in (\ref{eccola bis}), we have
\begin{eqnarray*}
& \varepsilon_{k+1} - \varepsilon_k = \frac{b_1'}{q^2} + \frac{b_2'+b_1''/2}{q^4} + \frac{b_3' + b_2''/2 + b_1'''/6}{q^5} + O\left( \frac{1}{q^5} \right).
\end{eqnarray*}
Equaling these two expansions, we obtain that $a_i$ and $b_j$ solve
\begin{equation} \label{prima}
\begin{cases}
A(s_0^q+a_0) b^2_1 = b_1' \\
B(s_0^q+a_0)b_1^3 + A'(s_0^q+a_0) a_1b_1^2 + 2A(s_0^q+a_0) b_1 b_2 = b_2' + b_1''/2 \\
F(a_i,b_j) = b_3' + b_2''/2 + b_1'''/6
\end{cases}
\end{equation}
From the other hand, directly from the first expansion in (\ref{eccola}), we conclude that
\begin{eqnarray*}
& s_{k+1} - s_k = \frac{a_0'}{q} + \frac{a_1' + a_0''/2}{q^2} + \frac{a_2' + a_1''/2 + a_0'''/6}{q^3} + O\left( \frac{1}{q^4} \right),
\end{eqnarray*}
which --compared which the second expansion in (\ref{eccola})-- gives the system
\begin{equation} \label{seconda}
\begin{cases}
a_0' = b_1 \\
a_1' + a_0''/2 = b_2 \\
a_2' + a_1''/2 + a_0'''/6 = b_3
\end{cases}
\end{equation}
%\vspace{0.20cm}
\noindent $\bullet$ Expressions of $a_0$ and $b_1$.
To compute $a_0$ and $b_1$, we solve the system
\begin{equation} \label{system:a_0_b_1}
\begin{cases}
b_1 = a_0' \\
b_1' = A(s_0^q+a_0) b^2_1
\end{cases}
\end{equation}
Replacing $b_1$ by $a_0'$ in the second equation, it gives
\begin{equation}
    \label{equation:solving_a_0}
a_0''=(a_0')^2A(s_0^q+a_0).
\end{equation}
If we denote by $A_1(s) = -\tfrac{2}{3}\log k(s)$ a primitive of $A$, then from Equation \eqref{equation:solving_a_0} follows
\[
\left(a_0'e^{-A_1(s_0^q+a_0)}\right)'=0.
\]
Hence $a_0'e^{-A_1(s_0^q+a_0)}$ is constant. Consider now $A_2(s) = \int_0^sk^{2/3}(r)dr$, which is a primitive of $\exp(-A_1)$. We just proved that $A_2(s_0^q+a_0)$ has constant derivative, hence it must be of the form $A_2(s_0^q + a_0(x)) = ux+v$ for any $x\in\RR$, where $u,v\in\RR$. Since, by definition, $A_2(s_0^q + a_0(0)) = A_2(s_0^q) = v$, we have $v = A_2(s_0^q)$. The expression of $u$ is given by $u = A_2(s_0^q + a_0(1)) - A_2(s_0^q) = A_2(s_0^q + \ell) - A_2(s_0^q) = \int^{\ell}_0 k^{2/3}(r) dr$. Finally, $b_1$ follows from $b_1 = a_0'$. \\
\noindent $\bullet \bullet$ Expressions of $a_1$ and $b_2$. To compute $a_1$ and $b_2$, we solve the system
\begin{equation} \label{system:a_1_b_2}
\begin{cases}
b_2 = a_1' + a_0''/2\\
b_2' + b_1''/2 = B(s_0^q + a_0)b_1^3 + A'(s_0^q + a_0) a_1b_1^2 + 2A(s_0^q + a_0) b_1 b_2.
\end{cases}
\end{equation}
Note that the terms note containing $a_1$ nor $b_2$ can be computed using the expression of $a_0$ and $b_1$ we just obtained. Let us replace in the second equation of \eqref{system:a_1_b_2} $b_2$ by the expression given by the first equation: we obtain an equation for which we split the terms containing $a_1$ from the others. Namely,
\begin{equation}
    \label{equation:solving_a_1}
a_1''-2A(s_0^q + a_0)b_1a_1'-A'(s_0^q + a_0)a_1b_1^2 
= 
A(s_0^q + a_0)b_1a_0''+B(s_0^q + a_0)b_1^3-\tfrac{1}{2}b_1''-\tfrac{1}{2}a_0^{(3)}.
\end{equation}
Replacing $a_0$ and $b_1$ by the expressions we just found, the left-hand side of \eqref{equation:solving_a_1} can be expressed as
\[
a_1''+\tfrac{4L}{3}k^{-5/3}k'a_1'+\tfrac{2L^2}{3}\left(k^{-7/3}k''-k^{-10/3}{k'}^2\right)a_1 = k^{-2/3}(a_1k^{2/3})'',
\]
where it is implicitly understood that $k$ and its derivatives are evaluated in $s_0^q + a_0$. The right-hand side of \eqref{equation:solving_a_1} vanishes. Hence equation \eqref{equation:solving_a_1} is equivalent to
\[
k^{-2/3}(a_1k^{2/3})''=0.
\]
Since $a_1$ is periodic and vanishes at $0$, we necessarily have $a_1=0$. The expression of $b_2$ comes from the first equation of \eqref{system:a_1_b_2}, namely $b_2=a_0''/2$. \\
%\vspace{0.20cm}
$\bullet \bullet \bullet$ Expressions of $a_2$ and $b_3$. Although it will not be employed in the subsequent computations, we shall derive an explicit expression for the coefficient $a_2$. By making use of equations \eqref{prima} and \eqref{seconda}, and taking into account that $a_1 = 0$, we obtain the following system:
\begin{equation}\label{terza}
    %\begin{equation}
\begin{cases}
b_3=a_2^{\prime}+\frac{a_0'''}{6} \\
A^{\prime}(s_0^q + a_0) b_1^2 a_2+A(s_0^q + a_0)\left(b_2^2+2 b_1 b_3\right)+ 
B(s_0^q + a_0) 3 b_1^2 b_2+C(s_0^q + a_0) b_1^4=\frac{b_1^{\prime \prime\prime}}{6} +\frac{b_2^{\prime\prime}}{2} +b_3^{\prime}
\end{cases}
%\end{aligned}
\end{equation}
From the first equation of \eqref{terza} we have
$ b_3^\prime=a_2^{\prime\prime} +\frac{a_0^{(4)}}{6} $ which in turn gives
$$ b_3^\prime=a_2^{\prime\prime} +\left( \frac{11 k^\prime k^{\prime\prime}k^{-\frac{10}{3}}}{27} -\frac{8 (k^\prime)^3 k^{\frac{13}{3}}}{27}  -\frac{k^{\prime\prime\prime}k^{-\frac{7}{3}}}{9}\right).$$
Replacing into the second of \eqref{terza} and grouping all the terms with $a_2$, we get 
\begin{equation}
    (k^{\frac{2}{3} }a_2)^{\prime\prime}  =L^4\left(\frac{40(k^\prime)^3-45kk^\prime k^{\prime\prime}+9k^2k^{\prime\prime}}{810k^5} +\frac{2 k^\prime}{45 k}\right).
\end{equation}
The right-hand side is the derivative of 
$$L^3\left(\frac{1}{810} \left( 9k^{\prime\prime}{k^{-\frac{7}{3}}} -12{(k')^2}{k^{-\frac{10}{3}}}\right)+\frac{k^{\frac{2}{3}}}{15}\right) +c,$$
where $c$ is a constant such that this function has zero mean. Consequently, at this point we can integrate another time and get
$$ a_2(x)=k^{-\frac{2}{3}}(a_0(x))\left(\int^x_0 L^3\left(\frac{1}{810} \left( 9{k^{\prime\prime}}{k^{-\frac{7}{3}}} -12{(k')^2}{k^{-\frac{10}{3}}}\right)+\frac{k^{\frac{2}{3}}}{15}\right)(a_0(t)) dt +cx\right) .$$
The value of $b_3$ can now be easily derived from the first one of (\ref{terza}).
%\noindent -- Expressions of $a_2$ and $b_3$.
%Note that we don't express $a_2$ explicitely -- it will not be useful for what follows, and moreover its expression is rather complicated. The expression of $b_3$ depending on $a_2$ and $a_0$ comes from \eqref{seconda}, together with the previous result stating that $a_1=0$.
\end{proof}

\section{Proof of Theorem \ref{MAIN THEO}}
This section is entirely devoted to the proof of Theorem \ref{MAIN THEO}, providing coefficient $\beta_5$ for the outer length billiard map.
\begin{proof}
    We start the computation of the beta function by writing its value at rational points of the form \(\frac{1}{q}\), which (by the expansion \eqref{Taylorgen} of the generating function $H$) %and \eqref{eccola bis}-- 
    is
    \begin{equation}\label{Taylorbeta}
        \beta \left(\frac 1q \right)=\frac{1}{q}\sum_{n=0}^{q-1} H\left(s_n, s_{n+1}\right)=\frac{1}{q}\sum_{n=0}^{q-1} \varepsilon_n+\frac{k^2}{12} \varepsilon_n^3+\frac{k k'}{12}  \varepsilon_n^4 + \frac{2k^4 + 4k'^2 + 7kk''}{240}  \varepsilon_n^5 +O(\varepsilon_n^6).
    \end{equation}
Here, the curvature $k$ and its derivates $k'$ and $k''$ are to be understood as evaluated in \( s_n \). \\
Now, we substitute in the above formula \( s_n \) and \( \varepsilon_n \) with their corresponding Taylor expansions obtained in Proposition \ref{Taylorsk}. We then proceed to group the various terms according to their order of magnitude \( q^k \). \\
~\newline 
First, we observe that the summation of \( \varepsilon_n \) is simply equal to the perimeter \( \ell \) of \( D \), so that $\beta_1 = \ell$. \\
~\newline By inspecting the formula even before performing the substitution, we see that there are no terms of order \( q^{-2} \), so that $\beta_2 = 0$, as expected by Marvizi-Melrose’s theory. \\
~\newline 
The second term of the summation on the right-hand side of \eqref{Taylorbeta} becomes, after the substitution and after grouping the various powers of $q$,
\begin{equation}\label{contrib1}
\begin{aligned}
& \frac{1}{12} \sum_{n=0}^{q-1}  k^2\left(s_n\right) \varepsilon_n^3 = \frac{1}{12} \sum_{n=0}^{q-1}  k^2\left(a_0+\frac{a_2}{q^2}+O\left(\frac{1}{q^3}\right)\right)\left(\frac{b_1}{q}+\frac{b_2}{q^2}+\frac{b_3}{q^3}\right)^3= \\
& = \sum_{n=0}^{q-1} \frac{k^2 b_1^3}{12} \frac{1}{q^3}+\frac{k^2 b_1^2 b_2}{4} \frac{1}{q^4}+\frac{2 k k' b_1^3 a_2+3 k^2\left(b_1^2 b_3+b_1 b_2^2\right)}{12} \frac{1}{q^5}+O\left(\frac{1}{q^6}\right).
\end{aligned}
\end{equation}
Similarly, we have that
\begin{equation} \label{contr2}
    \begin{aligned}
& \frac{1}{12} \sum_{n=0}^{q-1}  k\left(s_n\right) k^{\prime}\left(s_n\right) \varepsilon_n^4 = \frac{1}{12} \sum_{n=0}^{q-1}  k k^{\prime} %\left(a_0+\frac{a_2}{q^2}+O\left(\frac{1}{q^3}\right)\right)
\left(\frac{b_1}{q}+\frac{b_2}{q^2}+O\left(\frac{1}{q^3}\right)\right)^4= \\
& = \sum_{n=0}^{q-1} \frac{k k^{\prime} b_1^4}{12}  \frac{1}{q^4} + \frac{k k^{\prime}b_1^3 b_2}{3}  \frac{1}{q^5}+O\left(\frac{1}{q^6}\right).
\end{aligned}
\end{equation}
Finally, the last term is
\begin{equation}\label{contr3}
    \sum_{n=0}^{q-1} \left( \frac{2 k^4+4 k'^2+7 k k''}{240} \right) b_1^5\frac{1}{q^5} +O\left(\frac 1 {q^6}\right).
\end{equation}
We recall that, in the last three formulas, it is implicitly understood that all functions \( a_i, b_i \) are evaluated at \( n/q \), and the curvature $k$ and its derivatives $k'$ and $k''$, where not explicitly specified, are computed at \( s_0^q + a_0(n/q) \). To determine \( \beta_3 \), we compute
\(\lim_{q\to +\infty} q^3 \left( \beta\left(\frac{1}{q}\right) - \frac{\ell}{q} \right) \). \\
From formulas \eqref{contrib1}, \eqref{contr2}, and \eqref{contr3}, we simply obtain that
\begin{equation*}
        \beta_3 = \lim_{q\to +\infty} \frac{1}{12 q} \sum^{q-1}_{n=0} \left(k^2b_1^3 +O\left(\frac 1q\right)\right)
\end{equation*}
By Proposition \ref{Taylorsk}, we have that $b_1 = L k^{-\frac{2}{3}} \Rightarrow k^2 b_1^3 = L^3$, so that 
$$\beta_3=\frac{1}{12}L^3 = \frac{1}{12} \left(\int^\ell_0 k^{2/3}(r) dr\right)^3.$$
Note that the leading part of this limit is constant, while the term denoted by \( O(1/q) \) contains only higher-order terms. We will take this into account when analyzing \( \beta - \ell /q - \beta_3/q^3 \), considering only the terms present in \( O(1/q) \). \\
~\newline
Regarding the terms of order 4, we obtain the following expression:
\begin{equation*}
     \sum_{n=0}^{q-1}\left( \frac{k^2  b_1^2 b_2}{4} + \frac{k k^{\prime} b_1^4}{12} \right) \frac{1}{q^4}.
\end{equation*}
Since, by Proposition \ref{Taylorsk}, we have that $b_1 = L k^{-\frac{2}{3}}$ and $b_2=-\frac{L^2 k'k^{-\frac{7}{3}}}{3}$, we immediately conclude (again, as expected by Marvizi-Melrose’s theory), that $\beta_4=0$. \\
~\newline
The terms of order $5$ are
\begin{equation*}
   \sum_{n=0}^{q-1} \left[\frac{2 k^4+4 k^{\prime 2}+7 k k^{\prime \prime}}{240} b_1^5+\frac{1}{12}\left(2 k k^{\prime} b_1^3 a_2 + 3k^2\left(b_3 b_1^2 + b_2^2 b_1\right)+4 k k^{\prime} b_1^3 b_2\right)\right]\frac{1}{q^5}.
\end{equation*}
Since --by Proposition \ref{Taylorsk})-- \( b_3 = a_2'+\frac{a_0'''}{6} \), we substitute it into the previous equation and separate the summation containing the terms with \( a_2 \):
\begin{equation*}
    \underbrace{\sum_{n=0}^{q-1} \frac{1}{12}\left(2 k k^{\prime} b_1^3 a_2+3 k^2 b_1^2 a_2^{\prime}\right)}_{:=S_1} + \underbrace{\sum_{n=0}^{q-1} \left(\frac{2 k^4+4 k^{\prime 2}+7 k k^{\prime \prime} }{240} b_1^5+\frac{1}{12}\left(3k^2 b_2^2 b_1+4 k k^{\prime} b_1^3 b_2+\frac{1}{2}k^2 a_0^{\prime \prime\prime} b_1^2\right)\right)}_{:= S_2}.
\end{equation*}
We remark that the sum \( S_1 \) contains \( a_2 \) and the sum \( S_2 \) doesn't contain \( a_2 \). As established earlier, we have $$ \beta_5 = \lim_{q\to +\infty} q^5 \left( \beta\left(\frac{1}{q}\right)-\frac{\ell}{q} -\frac{\beta_3}{q^3} \right)=\lim_{q\to +\infty} \frac{1}{q}(S_1+S_2).$$
By studying the limit $\lim_{q\to +\infty} \frac{1}{q}S_1$, we obtain:
\begin{equation}\label{S1}
\begin{aligned}
    &\frac{1}{12} \lim _{q \rightarrow+\infty}  \frac{1}{q} \sum_{n=0}^{q-1} \left(2 k k^{\prime} k^{-2} a_2L^3+3 k^2 k^{-\frac{4}{3}} a_2^{\prime} L^2\right)=\\ & \frac{1}{12} \int_0^1 \left(\frac{2k^{\prime}}{k}\left(a_0(x)\right) a_2(x) L^3+3 k^{\frac{2}{3}}\left(a_0(x)\right) a_2^{\prime}(x) L^2\right) d x
\end{aligned}
\end{equation}
where --once again-- in the summations we have used the convention that the functions \( a_i, b_i \) are evaluated at $n/q$, while the functions \( k, k' \) are evaluated at \( a_0(n/q) \). Similarly, in the integral on the right-hand side, \( a_i, b_i \) are evaluated at \( x \), and \( k, k' \) at \( a_0(x) \).
By integrating by parts the second term inside the integral, we have
\begin{equation} \label{magia}
    \int_0^1 k^{\frac{2}{3}}\left(a_0(x)\right) a_2^{\prime}(x) L^2d x=\left.k^{\frac{2}{3}}\left(a_0(x)\right) a_2(x) L^2\right|_0 ^1-\int_0^1 \frac{2k^{\prime}}{3k}\left(a_0(x)\right)a_2(x) L^3 d x .
\end{equation}
By periodicity, the first term is $0$. By substituting the remaining expression of (\ref{magia}) inside \eqref{S1}, we conclude that the first limit is $0$. \\
Let us proceed with the calculation of the limit $\lim_{q\to +\infty} \frac{1}{q}S_2$. Taking into account that \( a_0'(x) = L k^{-\frac{2}{3}}(a_0(x)) \), see (\ref{conto diretto}), we have that
\[ a_0^{\prime \prime \prime} = L^3 \left( -\frac{2 k^{\prime \prime}}{3k^3} + \frac{14 k'^2}{9k^4} \right). \]  
Taking into account the expressions of $a_i, b_j$ given in (\ref{conto diretto}) and by substituting the previous expression into \( S_2 \), we obtain:
%\begin{equation}\label{S2finale}
%\begin{aligned} \lim_{q\to+\infty} &\frac 1q S_2=\lim_{q\to+\infty}\frac{1}{q}\sum_{n=0}^{q-1}  \frac{2 k^4+4 k^{\prime 2}+7 k k^{\prime \prime}}{240} b_1^5+\frac{1}{12} k^2\left(\frac{a_0^{\prime \prime \prime} b_1^2}{2}+3 b_2^2 b_1\right)= \\
%& =\lim_{q\to+\infty}\frac{1}{q}\sum_{n=0}^{q-1}\left(\frac{k^{2 / 3}}{120}+\frac{k^{-\frac{10}3} k^{\prime 2}}{60}+\frac{7}{240} k^{-\frac{7}{3}} k^{\prime \prime}-\frac{1}{36} k^{-\frac{7}{3}} k^{\prime \prime}+\frac{7}{36 \cdot 3} k^{-\frac{10}{3}} k^{\prime 2}+\frac{1}{36} k^{-\frac{10}{3}} k^{\prime 2}\right) L^5= \\
%& =\lim_{q\to+\infty}\frac{1}{q}\sum_{n=0}^{q-1}\left(\frac{k^{2 / 3}}{120}+\frac{59}{540} k^{-\frac{10}{3}} k^{\prime 2}+\frac{1}{720} k^{-\frac{7}{3}} k^{\prime \prime}\right) L^5=\\
%&=L^5\int^1_0 \left(\frac{k^{2 / 3}}{120}+\frac{59}{540} k^{-\frac{10}{3}} k^{\prime 2}+\frac{1}{720} k^{-\frac{7}{3}} k^{\prime\prime}\right) dx 
%\end{aligned}
%\end{equation}

\begin{equation}\label{S2finale}
\begin{aligned} & \lim_{q\to+\infty} \frac 1q S_2=\lim_{q\to+\infty}\frac{1}{q}\sum_{n=0}^{q-1}  \frac{2 k^4+4 k^{\prime 2}+7 k k^{\prime \prime}}{240} b_1^5+\frac{1}{12}\left(3k^2 b_2^2 b_1+4 k k^{\prime} b_1^3 b_2+\frac{1}{2}k^2 a_0^{\prime \prime\prime} b_1^2\right) = \\
%& =\lim_{q\to+\infty}\frac{1}{q}\sum_{n=0}^{q-1}\left(\frac{k^{2 / 3}}{120}+\frac{k^{-\frac{10}3} k^{\prime 2}}{60}+\frac{7 k^{-\frac{7}{3}} k^{\prime \prime}}{240} -\frac{k^{-\frac{7}{3}} k^{\prime \prime}}{36} + \frac{7 k^{-\frac{10}{3}} k^{\prime 2}}{36 \cdot 3} +\frac{k^{-\frac{10}{3}} k^{\prime 2}}{36} {\bf-\frac{{k'}^2k^{-10/3}}{9}}\right) L^5= \\
& =\lim_{q\to+\infty}\frac{L^5}{q}\sum_{n=0}^{q-1}\left(\frac{k^{2/3}}{120} -\frac{k^{-\frac{10}{3}} k^{\prime 2}}{540} +\frac{k^{-\frac{7}{3}} k^{\prime \prime}}{720} \right) = L^5\int^1_0 \left(\frac{k^{2 / 3}}{120}-\frac{k^{-\frac{10}{3}} k^{\prime 2}}{540} +\frac{k^{-\frac{7}{3}} k^{\prime\prime}}{720} \right) dx.
\end{aligned}
\end{equation}
We finally integrate by parts the last term of the integral, obtaining:
\[ \begin{aligned}
& \int_0^1 L^5 k^{-\frac{7}{3}} k^{\prime \prime} d x= L^4 \int_0^1  k^{-5 / 3} k^{\prime \prime}\left(k^{-\frac{2}{3}} L\right)  d x= L^4 \int_0^1  k^{-\frac{5}{3}}\left(k'\right)' d x=   \frac{5 L^5}{3} \int_0^1  k^{-\frac{10}3} k^{\prime 2} d x.
\end{aligned}\]
%{\color{blue}
%Here I get a + since the $-$ of the derivative cancels out with the one of the integration by part:
%\[
%\int_0^1 L^5 k^{-\frac{7}{3}} k^{\prime \prime} d x = \frac{5}{3} \int_0^1 L^5 k^{-\frac{10}3} %k^{\prime 2} d x.
%\]
%}
Replacing the expression above in \eqref{S2finale}, we conclude that
\[
\lim_{q\to+\infty} \frac 1q S_2 = L^5\int^1_0 \left(\frac{k^{2 / 3}}{120}+\frac{k^{-\frac{10}{3}} k^{\prime 2}}{2160}  \right)dx
\]
%\olga{This morning I did all the computations in order to check the coefficients.. Only in the end I found a difference... But perhaps I am cooked.}
Finally, switching to arc length as the variable of integration, we obtain the desired result: %\luca{All coefficients to be checked. }
%\[
%\beta_5=L^4\int^l_0\left(\frac{k^{4 / 3}}{120}+\frac{77}{720} k^{-\frac{8}{3}} k^{\prime 2} \right)ds
%\]
\[
\beta_5=L^4\int^\ell_0\left(\frac{k^{4 / 3}(s)}{120}+\frac{k^{-\frac{8}{3}}(s) k^{\prime 2}(s)}{2160} \right)ds.
\]
\end{proof}

\section{Lazutkin coordinates and caustics}

%Consider a strictly convex bounded domain $\Omega\subset\RR^2$ with smooth boundary. 

%\begin{definition}
%We say that a Jordan curve $\Gamma\subset \RR^2$ containing $\Omega$ in its bounded component is a \textit{caustic} for the outer-length billiard reflection outslide $\Omega$ if the outer-length billiard map outside $\Omega$ sends any point $p_0\in\Gamma$ to a point $p_1\in\Gamma$.
%\end{definition}

A consequence of Proposition \ref{exp} is that we can compute explicitly Lazutkin coordinates \cite{LAZ} for order 4 in the case of outer length billiards.  
\begin{lemma}[Lazutkin for outer length billiards] The coordinates 
\begin{eqnarray*}
x(s) &=& \frac{1}{L} \int^s_0 k^{2/3}(r) dr, \qquad L:= \int^\ell_0 k^{2/3}(r) dr \\
y(s,\varepsilon) &=& x(s + \varepsilon) - x(s)
\end{eqnarray*}
are so that the outer length billiard dynamics is given by
$$x \mapsto x + y, \qquad y \mapsto y + O(y^4).$$
\end{lemma}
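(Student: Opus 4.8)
The plan is to transport the expansion of Proposition~\ref{exp} through the change of coordinates $(s,\varepsilon)\mapsto(x,y)$ and to check the two announced normal forms term by term. Write $s_0\mapsto s_1=s_0+\varepsilon_0$ and $s_1\mapsto s_2=s_1+\varepsilon_1$ for two consecutive iterates, so that the old Lazutkin coordinates are $x=x(s_0)$, $y=x(s_1)-x(s_0)$ and the new ones are $x(s_1)$, $y_1=x(s_2)-x(s_1)$. The first relation is immediate: since $s_1=s_0+\varepsilon_0$, we have $x(s_1)=x(s_0)+\big(x(s_1)-x(s_0)\big)=x+y$, which is exactly $x\mapsto x+y$. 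It then remains to prove $y_1=y+O(y^4)$, and since $x'(s)=k^{2/3}(s)/L>0$ gives $y=x'(s_0)\varepsilon_0+O(\varepsilon_0^2)$, hence $\varepsilon_0=O(y)$ and $y=O(\varepsilon_0)$, it suffices to show that the second difference
\[
y_1-y=x(s_2)-2x(s_1)+x(s_0)=O(\varepsilon_0^4).
\]

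First I would Taylor-expand $x(s_0)$, $x(s_1)$ and $x(s_2)$ around $s_0$, writing $s_2=s_0+\sigma$ with $\sigma:=\varepsilon_0+\varepsilon_1=2\varepsilon_0+A(s_0)\varepsilon_0^2+B(s_0)\varepsilon_0^3+O(\varepsilon_0^4)$ supplied by Proposition~\ref{exp}. Substituting the powers $\sigma,\sigma^2,\sigma^3$ and collecting the coefficients of $\varepsilon_0^k$ in $x(s_2)-2x(s_1)+x(s_0)$, the contributions of orders $\varepsilon_0^0$ and $\varepsilon_0^1$ cancel automatically, while the coefficients of $\varepsilon_0^2$ and $\varepsilon_0^3$ reduce (writing $x^{(j)}=x^{(j)}(s_0)$, $A=A(s_0)$, $B=B(s_0)$) to
\[
x'A+x''\qquad\text{and}\qquad x'B+2x''A+x'''.
\]
The whole statement thus comes down to verifying that these two expressions vanish identically.

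The verification is a direct substitution of $x'=k^{2/3}/L$, $A=-\tfrac{2k'}{3k}$ and $B=\tfrac{10k'^2}{9k^2}-\tfrac{2k''}{3k}$, together with the resulting $x''=\tfrac{2}{3L}k^{-1/3}k'$ and $x'''=\tfrac{2}{3L}k^{-1/3}k''-\tfrac{2}{9L}k^{-4/3}k'^2$. The first identity is really the defining property of the Lazutkin coordinate: $x'A+x''=0$ is the linear equation $(\log x')'=-A$, whose solution $\log x'=-\int A=\tfrac{2}{3}\log k+\text{const}$ forces precisely $x'\propto k^{2/3}$, which is why the quadratic term of the twist is annihilated. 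The genuinely informative point --- and the only place where something could in principle fail --- is that the \emph{same} choice of $x$ also kills the cubic coefficient $x'B+2x''A+x'''$; substituting the formulas above, the $k^{-4/3}k'^2$ terms combine as $\tfrac{10}{9}-\tfrac{8}{9}-\tfrac{2}{9}=0$ and the $k^{-1/3}k''$ terms as $-\tfrac{2}{3}+\tfrac{2}{3}=0$, so it vanishes as well. This is the Marvizi--Melrose ``one order for free'' phenomenon, and it is the step I would flag as the crux: it is not forced by the construction and holds only because the specific coefficients $A,B$ produced in Proposition~\ref{exp} conspire with $x'\propto k^{2/3}$. Once both identities are established, $y_1-y=O(\varepsilon_0^4)=O(y^4)$, which completes the proof.
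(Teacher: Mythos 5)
Your proof is correct and follows essentially the same route as the paper: both reduce the claim to the two identities $x'A+x''=0$ and $x'B+2x''A+x'''=0$ coming from the expansion of Proposition \ref{exp}, the only cosmetic difference being that you organize the computation as a second difference $x(s_2)-2x(s_1)+x(s_0)$ expanded around $s_0$ rather than expanding $y_1=f(s_1+\varepsilon_1)-f(s_1)$ and re-centering. A small merit of your write-up is that you explicitly carry out the cancellation in the cubic coefficient, which the paper dismisses as ``easy to check by direct computation.''
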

\begin{proof} Let 
$$(s,\varepsilon) \mapsto (x,y) := (f(s),f(s+\varepsilon) - f(s))$$ 
a change of coordinates so that $(x_k,y_k) \mapsto (x_k + y_k, y_{k+1})$. Then --by using the expansion of $\varepsilon_1$ given in (\ref{exp})-- we have
$$\begin{aligned}
y_1 & = x_2 - x_1 = f(s_1+\varepsilon_1) - f(s_1) = f'(s_1) \varepsilon_1 + \frac{f''(s_1)}{2} \varepsilon_1^2 + \frac{f'''(s_1)}{6} \varepsilon_1^3 + O(\varepsilon_1^4) = \\
&= \left( f'(s_0) + f''(s_0) \varepsilon_0 + \frac{f'''(s_0)}{2} \varepsilon_0^3 \right) \left( \varepsilon_0 + A(s_0)\varepsilon_0^2 + B(s_0) \varepsilon_0^3 \right) + \\
& + \left( f''(s_0) + f'''(s_0) \varepsilon_0 \right) \frac{\left( \varepsilon_0 + A(s_0)\varepsilon_0^2 + B(s_0) \varepsilon_0^3 \right)^2}{2} + \frac{f'''(s_0)}{6} \varepsilon_0^3 + O(\varepsilon_0^4) = \\
& = \left( f'(s_0) \varepsilon_0 + \frac{f''(s_0)}{2} \varepsilon_0^2 + \frac{f'''(s_0)}{6} \varepsilon_0^3 \right) + \left( f''(s_0) + f'(s_0) A(s_0) \right) \varepsilon_0^2 + \\
& + \left( f'(s_0) B(s_0) + 2 f''(s_0) A(s_0) + f'''(s_0) \right) \varepsilon_0^3 +  O(\varepsilon_0^4) = \\
& = y_0 + \left( f''(s_0) + f'(s_0) A(s_0) \right) \varepsilon_0^2 + \left( f'(s_0) B(s_0) + 2 f''(s_0) A(s_0) + f'''(s_0) \right) \varepsilon_0^3 +  O(\varepsilon_0^4). 
\end{aligned}$$
Consequently, if we want to get rid of the $\varepsilon_0^2$ and $\varepsilon_0^3$ terms, we need to choose $f$ solving
$$
\begin{cases}
f''(s_0) + f'(s_0) A(s_0) = 0 \\
f'(s_0) B(s_0) + 2 f''(s_0) A(s_0) + f'''(s_0) = 0
\end{cases}
$$
Integrating the first equation, we immediately obtain the desired formula for $f$, giving (up to normalization):
$$x(s) = \frac{1}{L} \int^s_0 k^{2/3}(r) dr, \qquad L:= \int^\ell_0 k^{2/3}(r) dr.$$
Then, by direct computation, it is easy to check that such a function solves also the second equation.
\end{proof}
\indent As a consequence, the outer length billiard map is a small perturbation of the integrable map 
$$(x,y)\mapsto(x + y,y),$$ 
satisfying the assumptions of Lazutkin’s theorem \cite{LAZ}[Theorem 1]. Applying this theorem, the next corollary of Proposition \ref{exp} immediately follows. 
\begin{theorem}
Arbitrary close to the boundary $\partial \Omega$, there exist smooth caustics for the outer length billiard map. The union of these caustics has positive measure.
\end{theorem}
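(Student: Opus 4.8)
The plan is to reduce the statement to a direct application of Lazutkin's invariant-curve theorem, exactly as announced in the sentence preceding the statement. By the preceding lemma, in the Lazutkin coordinates $(x,y)$ the outer length billiard map reads
$$F:(x,y)\longmapsto\bigl(x+y,\; y+O(y^4)\bigr),$$
i.e.\ it is a smooth, exact-symplectic monotone twist map differing from the integrable shear $(x,y)\mapsto(x+y,y)$ only by a remainder of order $O(y^4)$ in the action variable. This is precisely the normal form for which \cite{LAZ}[Theorem 1] produces a Cantor family of invariant curves accumulating at the boundary $\{y=0\}$, which corresponds to $\partial\Omega$.

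First I would verify that all hypotheses of Lazutkin's theorem are met. The twist condition comes from property 4 of Section \ref{SIS-DIN}, where the map is shown to be a genuine (negative) twist map with $H_{12}<0$; the smoothness of $F$ follows from the smoothness of $\partial\Omega$ together with the smoothness of the Lazutkin change of coordinates constructed in the preceding lemma; and the decisive requirement, that the deviation from the integrable shear be of sufficiently high order in $y$, is exactly the $O(y^4)$ remainder furnished by that lemma. Granting these hypotheses, the theorem yields, for each frequency $\omega$ in a set of Diophantine numbers accumulating at $0$, a smooth $F$-invariant curve $\Gamma_\omega=\{\,y=\omega+u_\omega(x)\,\}$ which is not null-homotopic and on which $F$ is smoothly conjugate to the rigid rotation by $\omega$.

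It then remains to transport these curves back and to estimate the measure of their union. Pulling each $\Gamma_\omega$ back through the Lazutkin change of variables, and then through the coordinates of Section \ref{SIS-DIN}, turns it into a smooth caustic of the outer length billiard lying arbitrarily close to $\partial\Omega$ as $\omega\to0$, which proves the first assertion. For the second, I would invoke the quantitative part of Lazutkin's theorem: the standard KAM measure estimate shows that the set of surviving Diophantine frequencies in $[0,\delta]$ has relative measure tending to $1$ as $\delta\to0$, so that in Lazutkin coordinates $\bigcup_\omega\Gamma_\omega$ has positive Lebesgue measure in every strip $0<y<\delta$. Since both coordinate changes are diffeomorphisms with nonvanishing Jacobian on the relevant region, they carry sets of positive measure to sets of positive measure, giving the claim in the original phase space.

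The main obstacle is less a genuine difficulty than a point demanding care: one must confirm that the order $O(y^4)$ delivered by the lemma is high enough to feed Lazutkin's quantitative estimates, and that the twist and smoothness bounds hold uniformly up to the boundary so that the surviving-frequency measure estimate degenerates to full density as $y\to0$. Once these uniformities are checked, both the existence of smooth caustics arbitrarily close to $\partial\Omega$ and the positivity of the measure of their union follow immediately from \cite{LAZ}[Theorem 1].
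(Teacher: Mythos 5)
Your proposal is correct and follows essentially the same route as the paper, which likewise deduces the theorem directly from the Lazutkin normal form $(x,y)\mapsto(x+y,\,y+O(y^4))$ established in the preceding lemma together with an application of Lazutkin's theorem. The additional care you take in verifying the hypotheses and in transporting the measure estimate back through the coordinate changes is sound but goes beyond what the paper records, since it treats the statement as an immediate corollary.
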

%\olga{The non-persistence of convex resonant caustics to be added?}
\indent On the other hand --regarding the non existence of caustics-- we underline that the following outer length billiard version of Mather’s theorem still holds. 
\begin{theorem} If the curvature of the boundary $\partial \Omega$ vanishes at some point, then the outer length billiard in $\partial \Omega$ has no caustics.
\end{theorem}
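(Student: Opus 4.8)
The plan is to argue by contradiction and to reduce the statement to a local instability phenomenon at the flat point, in the spirit of Mather's original glancing-billiard theorem. Suppose the outer length billiard admitted a caustic, i.e. a continuous invariant curve $\Gamma$ in the phase space $\hat{\mathcal P}$ that is not null-homotopic. Since $T$ is an exact monotone twist map (properties 3 and 4 of Section \ref{SIS-DIN}), Birkhoff's theorem applies: $\Gamma$ is the graph of a Lipschitz function over $\mathbb T$ and it separates $\hat{\mathcal P}$ into two invariant regions, one of them adjacent to the boundary $\{\varepsilon = 0\}$, i.e. to $\partial\Omega$. Let $s^\ast$ be a point where $k(s^\ast)=0$. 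Because $\partial\Omega$ is convex, $k\ge 0$, so $s^\ast$ is a local minimum of $k$ and $k'(s^\ast)=0$. By the formula in property 4, the twist $H_{12}=-\tfrac{k(s_0)k(s_1)H}{2\sin^2(\varphi/2)}$ degenerates (it vanishes to higher order) as a reflection point approaches $s^\ast$, and this degeneracy is the source of the obstruction.

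The mechanism I would exploit is the breakdown of the near-boundary normal form at $s^\ast$. By Proposition \ref{exp}, for orbits close to $\partial\Omega$ one has $\varepsilon_{1}=\varepsilon_0+A(s_0)\varepsilon_0^2+O(\varepsilon_0^3)$ with $A(s)=-\tfrac{2k'(s)}{3k(s)}$. Near $s^\ast$ one has $k(s)\sim \tfrac12 k''(s^\ast)(s-s^\ast)^2$, hence $A(s)\sim -\tfrac{4}{3(s-s^\ast)}$ blows up with opposite signs on the two sides of $s^\ast$. Equivalently, the Lazutkin change of variables of the previous section, whose generating density is $k^{2/3}$ and whose first correction integrates $A$, fails precisely here, since $\int A\,ds=-\tfrac23\log k$ diverges at $s^\ast$. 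Consequently a trajectory that hugs the boundary and sweeps past $s^\ast$ cannot keep $\varepsilon$ nearly constant: the accumulated change of $\varepsilon$ is not negligible. Following Mather's scheme, I would turn this into the construction of orbits that enter and then leave any prescribed neighborhood of $\partial\Omega$ near $s^\ast$, thereby exhibiting a Birkhoff region of instability that accumulates onto the boundary there.

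Finally, I would derive the contradiction. These escaping orbits cross every level $\{\varepsilon=\mathrm{const}\}$ in a neighborhood of $\partial\Omega$ near $s^\ast$. On the other hand, by Birkhoff's theorem any caustic $\Gamma$ is a Lipschitz graph over the whole circle $\mathbb T$; in particular it contains points with reflection parameter arbitrarily close to $s^\ast$, and it separates the phase space. An orbit that enters and leaves a neighborhood of $\partial\Omega$ near $s^\ast$ would then be forced to cross $\Gamma$, which is impossible for an invariant curve. Hence no caustic exists. The hard part will be the step carried out at $s^\ast$ itself: the expansion of Proposition \ref{exp} is valid only where $k>0$ and degenerates exactly at the flat point, so a rigorous argument requires a uniform local model of $T$ in a full neighborhood of the zero-curvature point (and not merely an asymptotic expansion) in order to actually produce the escaping orbits. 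This is the technical heart, and it is precisely the delicate estimate that makes Mather's theorem nontrivial; the divergence of $\int A=-\tfrac23\log k$ is the quantitative signature such a local model must capture.
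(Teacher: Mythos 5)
Your proposal and the paper's proof diverge completely, and yours has a genuine, self-acknowledged gap at its core. The paper does not construct escaping orbits or invoke Birkhoff regions of instability at all: it applies Mather's \emph{analytic} criterion directly, namely that the existence of a non-null-homotopic invariant curve forces $H_{22}(s_0,s_1)+H_{11}(s_1,s_2)<0$ along its (minimal) orbits. It then differentiates the explicit generating function \eqref{genfunc} twice, observes that $k(s_1)=0$ forces $k'(s_1)=0$ by convexity, hence $\gamma''(s_1)=\gamma'''(s_1)=0$ by \eqref{derivate gamma}, and checks term by term that every summand of $H_{11}(s_1,\cdot)$ and $H_{22}(\cdot,s_1)$ vanishes. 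The sum is therefore $0$, not strictly negative, and the criterion is violated. This is a short, closed computation; no near-boundary normal form, no orbit construction, no Lipschitz-graph separation argument is needed beyond the standard fact that a caustic's orbits are minimal and the invariant curve passes over the flat point.

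The concrete gap in your argument is exactly the step you flag as ``the technical heart'': you never produce the escaping orbits. The blow-up of $A(s)=-\tfrac{2k'(s)}{3k(s)}$ and the divergence of $\int A = -\tfrac{2}{3}\log k$ are only heuristic signals --- Proposition \ref{exp} is derived under the standing hypothesis of everywhere positive curvature, so none of its conclusions are available in a neighborhood of the flat point, and your asymptotic $A(s)\sim -\tfrac{4}{3(s-s^\ast)}$ additionally assumes $k''(s^\ast)>0$, which need not hold if the curvature vanishes to higher order. ``The Lazutkin conjugation fails at $s^\ast$'' does not by itself imply that orbits hugging the boundary must leave every neighborhood of it; turning that into a proof is essentially as hard as the theorem itself. (You also misattribute the scheme: Mather's original argument in \cite{Mather82} is the second-variation/convexity criterion the paper uses, not an instability construction.) To repair your route you would need a uniform local model of $T$ near the zero-curvature point and a genuine shadowing/instability lemma there; the far shorter repair is to switch to the paper's computation: write $H_{11}$ and $H_{22}$ from \eqref{genfunc} and evaluate at the flat point.
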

\begin{proof} We use Mather’s necessary analytic condition for the existence of a caustic \cite{Mather82}, that is
$$H_{22}(s_0, s_1) + H_{11}(s_1, s_2) < 0.$$
By using the general expression of the generating function \eqref{genfunc}, it is easily seen that
$$\begin{aligned}
& H_{1}(s_1,s_2)= -1 - \frac{(\gamma(s_2) - \gamma(s_1)) \wedge \gamma''(s_1)}{\gamma'(s_1) \wedge \gamma'(s_2)} - \\
& \frac{(\gamma(s_2) - \gamma(s_1)) \wedge (\gamma'(s_2) - \gamma'(s_1)) \cdot (\gamma''(s_1) \wedge \gamma'(s_2))}{(\gamma'(s_1) \wedge \gamma'(s_2))^2}.
\end{aligned}$$
Hence, we have that
$$\begin{aligned} & H_{11}(s_1,s_2)=\frac{\gamma^{\prime}\left(s_1\right) \wedge \gamma^{\prime \prime}\left(s_1\right)}{\gamma^{\prime}\left(s_1\right) \wedge \gamma^{\prime}\left(s_2\right)}-\frac{\left(\gamma\left(s_2\right)-\gamma\left(s_1\right)\right) \wedge \gamma^{\prime \prime \prime}\left(s_1\right)}{\gamma^{\prime}\left(s_1\right) \wedge \gamma^{\prime}\left(s_2\right)} + \\ & +2 \frac{\left(\gamma\left(s_2\right)-\gamma\left(s_1\right)\right) \wedge \gamma^{\prime \prime}\left(s_1\right)}{\left(\gamma^{\prime}\left(s_1\right) \wedge \gamma^{\prime}\left(s_2\right)\right)^{2}}\left(\gamma^{\prime\prime}\left(s_1\right) \wedge \gamma^{\prime}\left(s_2\right)\right) + \\ & +\frac{\gamma^{\prime \prime}\left(s_1\right) \wedge \gamma^{\prime}\left(s_2\right)}{\left(\gamma^{\prime}\left(s_1\right) \wedge \gamma^{\prime}\left(s_2\right)\right)^2}-\frac{\left(\gamma\left(s_2\right)-\gamma\left(s_1\right)\right) \wedge\left(\gamma^{\prime}\left(s_2\right)-\gamma^{\prime}\left(s_1\right)\right)}{\left(\gamma^{\prime}\left(s_1\right) \wedge \gamma^{\prime}\left(s_2\right)\right)^2}\left(\gamma^{\prime \prime \prime}\left(s_1\right) \wedge \gamma^{\prime}\left(s_2\right)\right) + \\ & +2 \frac{\left(\gamma\left(s_2\right)-\gamma\left(s_1\right)\right) \wedge\left(\gamma^{\prime}\left(s_2\right)-\gamma^{\prime}\left(s_1\right)\right)}{\left(\gamma^{\prime}\left(s_1\right) \wedge \gamma^{\prime}\left(s_2\right)\right)^3}\left(\gamma^{\prime \prime}\left(s_1\right) \wedge \gamma^{\prime}\left(s_2\right)\right)^2 .\end{aligned}$$
 Let us now assume that at a point on the boundary corresponding to the arc-length parameter value \( s_1 \), the curvature is zero, that is, \( k(s_1) = 0 \). Since the set is convex, this condition implies that also \( k'(s_1) = 0 \). From formulas \eqref{derivate gamma}, it follows that \( \gamma''(s_1) = \gamma'''(s_1) = 0 \). Substituting into the previous formula, we see that all the terms composing \( H_{11} \) vanish, and a similar argument holds for \( H_{22} \). As a consequence, we have that \( H_{11}(s_1,s_2) + H_{22}(s_0,s_1) = 0 \) for every \( s_0, s_2 \), and therefore no topologically nontrivial invariant curve can exist.

\end{proof}

%\section{(To discuss) Integrability of the elliptic outer-length billiard} \label{ellisse}


\begin{thebibliography}{20}

\bibitem{ATSym} Albers, A.; Tabachnikov, S. Introducing symplectic billiards. Adv. Math. 333,
822–867, (2018).

\bibitem{AT} Albers, A.; Tabachnikov, S. Monotone twist maps and Dowker-type theorems. Pacific Journal of Mathematics, Vol. 330 (2024), No. 1, 1–24. 

\bibitem{Bang} Bangert, V. Mather Sets for Twist Maps and Geodesics on Tori. In: Kirchgraber, U., Walther, HO. (eds) Dynamics Reported. Dynamics Reported, vol 1, (1988). 

\bibitem{BaBe} Baracco, L.; Bernardi, O. Totally integrable symplectic billiards are ellipses,
Advances in Mathematics, Volume 454, (2024). 

\bibitem{BaBeNa4} Baracco, L.; Bernardi, O.; Nardi, A. Bialy-Mironov type rigidity for centrally symmetric symplectic billiards, Nonlinearity 37 125025, (2024). 

\bibitem{BaBeNa} Baracco, L.; Bernardi, O.; Nardi, A. Higher order terms of Mather's $\beta$-function for symplectic and outer billiards, Journal of Mathematical Analysis and Applications, Volume 537, Issue 2,
(2024).

\bibitem{BaBeNa1} Baracco, L.; Bernardi, O.; Nardi, A. Area spectral rigidity for axially symmetric and Radon domains, ArXiv, https://arxiv.org/abs/2410.12644

\bibitem{BIALY92} Bialy, M. Convex billiards and a theorem by E. Hopf. Math Z 214, 147–154 (1993).

\bibitem{BiEl} Bialy, M. Mather $\beta$-Function for Ellipses and Rigidity. Entropy, 24, 1600, (2022).

\bibitem{CiGo} Cie\'slak, W., G\'o\'zd\'z, S. Extremal polygons circumscribed an oval. Acta Math Hung 54, 63–68 (1989).

\bibitem{DeTe} DeTemple, D. The geometry of circumscribing polygons of minimal perimeter. J Geom 49, 72–89 (1994). 

\bibitem{CorVigAlf} Fierobe, C.; Sorrentino, A.; Vig, Amir. Deformational spectral rigidity of axially-symmetric symplectic billiards. ArXiv, https://arxiv.org/abs/2410.13777

\bibitem{Glu} Glutsyuk A. On infinitely many foliations by caustics in strictly convex open billiards. Ergodic Theory and Dynamical Systems. 44(5), 1418-1467, (2024). 

\bibitem{LAZ} Lazutkin, V.F. The existence of caustics for a billiard problem in a convex domain, Mathematics of the USSR-Izvestiya, Volume 7, Issue 1, 185–214, (1973).

\bibitem{MaMe} Marvizi S.; Melrose R. Spectral invariants of convex planar regions. J. Differential Geom., 17 (3): 475-503, (1982).

\bibitem{Mather82} Mather, J.N. Glancing billiards, Ergodic Theory Dynam. Systems, Ergodic Theory and Dynamical Systems, 2, 3-4, 397-403, (1982).

\bibitem{MCLure-Vitale} McClure, D.E.; Vitale, R.A. Polygonal approximation of plane convex bodies,
Journal of Mathematical Analysis and Applications, Volume 51, Issue 2, (1975).

\bibitem{Si} Siburg, K.F. The principle of least action in geometry and dynamics. Lecture Notes
Math. 1844. Springer-Verlag, Berlin, (2004).

\bibitem{Sor} A. Sorrentino. Computing Mather’s $\beta$-function for Birkhoff billiards. Discrete and
Contin. Dyn. Syst. A, 35, 5055-5082, (2015).

\bibitem{Stachel}
Stachel H. The geometry of billiards in ellipses and their poncelet grids, Journal of Geometry, 40, 112, 3 (2021).

\bibitem{Tab-outer} Tabachnikov, S. On the dual billiard problem. Adv. in Math. 115, 221–249, (1995).

\bibitem{TabLIBRO} Tabachnikov, S. Geometry and billiards. Amer. Math. Soc., Providence, RI, (2005).

\bibitem{Daniel} Tsodikovich, D. Local rigidity for symplectic billiards, ArXiv, https://arxiv.org/abs/2501.08849

\bibitem{Zhang} Zhang, J. Spectral invariants of convex billiard maps: a viewpoint of Mather's beta function,  SCIENCE CHINA Mathematics, ISSN 1674-7283, (2021). 

\end{thebibliography}
\end{document}